\newcommand{\be}{\begin{equation}}
\newcommand{\ee}{\end{equation}}
\newcommand{\beq}{\begin{eqnarray}}
\newcommand{\eeq}{\end{eqnarray}}
\newtheorem{thm}{Theorem}[section]
\newtheorem{lma}{Lemma}[section]
\newtheorem{prop}{Proposition}[section]
\newtheorem{cor}{Corollary}[section]
\newtheorem{defn}{Definition}[section]
\theoremstyle{remark}
\numberwithin{equation}{section}
\def\be{\begin{equation}}
\def\ee{\end{equation}}
\def\bee{\begin{equation*}}
\def\eee{\end{equation*}}
\newcommand{\Ric}{\mathrm{Ric}}
\def\Ric{\text{\rm Ric}}
\def\p{\partial}
\def\e{\varepsilon}
\def\a{{\alpha}}
\begin{document}

\title{Rigidity on non-negative intermediate curvature}

\author[J. Chu]{Jianchun Chu$^1$}
\address[Jianchun Chu]{School of Mathematical Sciences, Peking University, Yiheyuan Road 5, Beijing, P.R.China, 100871}
\email{jianchunchu@math.pku.edu.cn}

\author[K.-K. Kwong]{Kwok-Kun Kwong$^2$}
\address[Kwok-Kun Kwong]{School of Mathematics and Applied Statistics, University of Wollongong, Northfields Ave, NSW 2522, Australia}
\email{kwongk@uow.edu.au}

\author[M.-C. Lee]{Man-Chun Lee$^3$}
\address[Man-Chun Lee]{Department of Mathematics, The Chinese University of Hong Kong, Shatin, N.T., Hong Kong}
\email{mclee@math.cuhk.edu.hk}

\thanks{$^1$Research partially supported by Fundamental Research Funds for the Central Universities (No. 7100603624).}

\thanks{$^2$Research partially supported by the CERL fellowship at University of Wollongong.}

\thanks{$^3$Research partially supported by Hong Kong RGC grant (Early Career Scheme) of Hong Kong No. 24304222 and a direct grant of CUHK}

\date{\today}

\begin{abstract}
In a recent work of Brendle-Hirsch-Johne, a notion of intermediate curvature was introduced to extend the classical non-existence theorem of positive scalar curvature on torus to product manifolds. In this work, we study the rigidity when the intermediate curvature is only non-negative when the ambient dimension is at most $5$.
\end{abstract}

\keywords{Intermediate curvature, Rigidity}

\maketitle

\section{Introduction}

In differential geometry, there has been great interest in studying the connection between the curvature and properties of the underlying manifold. For example, a conjecture of Geroch \cite{Geroch} (cf. also \cite{KW}) asks if there is a complete metric with positive scalar curvature on the torus $\mathbb T^n$. Indeed, the original formulation concerns a special case of the positive mass theorem (unsolved at that time): the Euclidean metric on $\mathbb R^n$ does not admit a non-trivial compactly supported perturbation while keeping the scalar curvature non-negative. The conjecture was solved by Schoen and Yau \cite{SchoenYau1979} for $3\le n\le 7$ by using minimal hypersurfaces, and by Gromov and Lawson \cite{GromovLawson} for all $n$ by using spinors. Relatively recently, Schoen and Yau \cite{SchoenYau2021} generalized the minimal hypersurface argument to all dimensions and prove the positive mass theorem by constructing minimal slicings.

More recently, Brendle, Hirsch and Johne \cite{BrendleHirschJohne2022} developed the method of stabled weighted slicings, which closely resembles the notion of minimal-slicings by Schoen and Yau \cite{SchoenYau1979, SchoenYau2021}, to study a generalization of the Geroch's conjecture. They defined a new notion of curvature, called the $m$-intermediate curvature $\mathcal{C}_m$, which interpolates between the Ricci curvature and the scalar curvature (and some other curvatures such as the bi-Ricci and tri-Ricci curvature) by choosing different values of $m$. For the reader's convenience, let us recall the definition of $\mathcal C_m$:

\begin{defn}
Suppose $(N^n,g)$ is a Riemannian manifold and $1\leq m\leq n-1$. We say that its $m$-intermediate curvature is non-negative if for any $x\in M$,
\begin{equation}
\mathcal{C}_m(e_1,...,e_m) =\sum_{p=1}^m \sum_{q=p+1}^n \mathrm{Rm}(e_p,e_q,e_p,e_q)\geq 0
\end{equation}
for any orthonormal basis $\{e_i\}_{i=1}^n$ of $T_xN$.  If the inequality holds for all  orthonormal frame, we will denote it using $\mathcal{C}_m\geq 0$ for notational convenience. The positivity and quasi-positivity of $\mathcal{C}_m$ can be defined analogously.
\end{defn}

When $m=1$, $\mathcal{C}_1$ refers to the Ricci curvature while when $m=n-1$, $\mathcal{C}_{n-1}$ corresponds to the scalar curvature (up to a multiplicative constant). The product manifold $N^{n}=M^{n-m} \times \mathbb{T}^{m}$ has  $\mathcal C_{m+1}>0$ and $\mathcal C_{m}\ge0$, if $M$ is positively curved.

Using the method of stable weighted slicings, Brendle, Hirsch and Johne \cite{BrendleHirschJohne2022} proved that, among other things, the product manifold $N^n=M^{n-m}\times \mathbb T^m$ does not admit a metric of positive $m$-intermediate curvature
for $n \le 7$ and for any compact manifold $M^{n-m}$. This theorem reduces to the solution to the  classical Geroch's conjecture when $m=n-1$. They also proved that under some condition on $m$ and $n$, if $\left(N^{n}, g\right)$ is a compact Riemannian manifold with positive $m$-intermediate curvature, then $N$ does not admit a stable weighted slicing of order $m \le n-1$.

The study of the topological or analytic properties of complete manifolds with non-negative Ricci curvature or scalar curvature has of course attracted great interest and is still being studied intensively, especially in higher dimensions. See for example \cite{SchoenYau1982}. We  refer readers to Gromov's  lecture notes \cite{GromovLecture} for a comprehensive overview on the recent development on scalar curvature geometry. In view of the above mentioned results in \cite{BrendleHirschJohne2022}, it is natural to ask whether we can obtain some rigidity results for manifolds which admit a metric with non-negative $m$-intermediate curvature, under some topological conditions. When $m=n-1$, it is well-known that if a compact manifold does not admit a metric with positive scalar curvature, then any metric with non-negative scalar curvature must be Ricci flat, using a variational argument. Hence any metric with non-negative scalar curvature on a torus must be flat by the splitting Theorem.  This is closely related to the rigidity in positive mass theorem. It is therefore natural to ask if we can extend the rigidity in case of $m=n-1$ to general $m\leq n-1$. 
%The most typical examples are the product manifolds $\mathbb{S}^{n-m}\times \mathbb{T}^m$ and $(\mathbb{S}^{n-m-1}\times \mathbb{T})\times \mathbb{T}^m$ for $1\leq m\leq n-2$ which both have $\mathcal{C}_m\geq 0$ but $\mathcal{C}_{m+1}$ is not necessarily positive. 
In this regard, we show a splitting Theorem, which roughly says that a compact manifold $N^n$ with $\mathcal C_m\ge 0$ has a cover which splits off a factor of $\mathbb R^{m}$, if there is a non-zero degree map from $N^n$ to $M^{n-m} \times \mathbb{T}^{m} $ for some compact manifold $M^{n-m}$.

\begin{thm}\label{main-rigidity}
Suppose $N^n$ is a compact manifold such that there exists a non-zero degree map $f:N^n\to M^{n-m}\times \mathbb{T}^m$ for some compact manifold $M$ and $1\leq m\leq n-1$ and $n\leq 5$. If $N$ admits a metric $g$ with $\mathcal{C}_{m}\geq 0$, then $(N^n,g)$ is isometrically covered by $(X^{n-m}\times \mathbb{R}^m,g_{X}+g_{\mathbb{R}^m})$ for some compact manifold $X$ with $\Ric(g_{X})\geq 0$ and hence $\Ric(g)\geq0$. Furthermore, if $\mathrm{scal}(N)\equiv 0$, then $\Ric(g_X)\equiv 0$ and $\Ric(g)\equiv 0$.
\end{thm}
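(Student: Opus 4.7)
The plan is to adapt the weighted slicing technique of Brendle--Hirsch--Johne and extract a splitting from the equality case of their stability inequality. From the topological side, composing $f$ with the projection $M^{n-m}\times\mathbb{T}^m\to\mathbb{T}^m$ and pulling back the $m$ standard generators of $H^1(\mathbb{T}^m;\mathbb{Z})$ yields classes $\alpha_1,\dots,\alpha_m\in H^1(N;\mathbb{Z})$; the nonzero-degree hypothesis ensures that the iterated cup product $\alpha_1\smile\cdots\smile\alpha_m$, paired with a pulled-back fundamental cohomology class of $M$, is nonzero against $[N]$, so Poincar\'e duality provides $m$ codimension-one integral homology classes with nonzero iterated intersection in $H_{n-m}(N;\mathbb{Z})$. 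Following BHJ, I would construct a chain of stable weighted slicings $N=\Sigma_0\supset\Sigma_1\supset\cdots\supset\Sigma_m$, each $\Sigma_k\subset\Sigma_{k-1}$ realising the appropriate pulled-back class with a positive weight $\rho_k$; under the hypothesis $n\le 5$ every layer is comfortably in the smooth regularity regime.

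Second, combining the first and second variations of the weighted area on this chain, exactly as in BHJ, yields on $\Sigma_m$ an integral inequality of the schematic form
\[
0\ \ge\ \int_{\Sigma_m}\!\Big(\mathcal{C}_m(\nu_1,\dots,\nu_m)\ +\ \sum_k |A_k|^2\ +\ \sum_k|\nabla\log\rho_k|^2\Big)\rho\,d\mathrm{vol},
\]
where $\nu_k$ are the unit normals to $\Sigma_k\subset\Sigma_{k-1}$, $A_k$ is the second fundamental form of that inclusion, and $\rho=\prod_k\rho_k$. Since $\mathcal{C}_m\ge 0$, every integrand must vanish identically: each $A_k\equiv 0$, each $\rho_k$ is constant, $\mathcal{C}_m$ is zero on the ordered normal frame, and the $\nu_k$ extend to a parallel orthonormal frame along $\Sigma_m$. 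The totally geodesic property of every layer together with the vanishing $A_k$ implies that the orthogonal distribution to $T\Sigma_m$ in $TN|_{\Sigma_m}$ is flat and integrable, and its leaves sweep out a neighbourhood of $\Sigma_m$ as an isometric product. Lifting to the universal cover $\widetilde N$ and invoking the de Rham decomposition theorem yields an isometric splitting $\widetilde N\cong\widetilde X\times\mathbb{R}^m$; compactness of $N$ then forces a finite cover of $N$ to be isometric to $X\times\mathbb{R}^m$ with $X$ compact.

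Finally, the curvature statements follow from a direct computation on the product: choosing an orthonormal frame with $e_1\in TX$ and $e_2,\dots,e_m$ tangent to the $\mathbb{R}^m$ factor, the sum defining $\mathcal{C}_m$ collapses to $\mathrm{Ric}_X(e_1,e_1)\ge 0$, and varying $e_1$ gives $\mathrm{Ric}(g_X)\ge 0$, hence $\mathrm{Ric}(g)\ge 0$. In the scalar-flat case, $\mathrm{scal}(g)=\mathrm{scal}(g_X)=0$ combined with $\mathrm{Ric}(g_X)\ge 0$ forces $\mathrm{Ric}(g_X)\equiv 0$. The principal obstacle I anticipate is the equality case analysis: extracting all vanishing statements packaged in the BHJ stability inequality, and then propagating the parallel normal frame from the innermost slicing $\Sigma_m$ to a globally parallel splitting of a finite cover, requires delicate unique-continuation and holonomy arguments. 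The restriction $n\le 5$ is almost certainly used here, both to guarantee smoothness at every layer of the chain and to invoke a low-dimensional structural or Gauss--Bonnet-type input on the fiber geometry.
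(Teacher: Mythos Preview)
Your proposal captures the overall architecture---build the weighted slicing, extract vanishings from the equality case, deduce a product---but there is a genuine gap at the step where you pass from ``all $A_k\equiv 0$ on $\Sigma_m$'' to ``a neighbourhood of $\Sigma_m$ is an isometric product.'' The vanishing statements you obtain from the BHJ stability inequality hold only \emph{on} $\Sigma_m$, not in a tubular neighbourhood. Knowing that the normal frame is parallel along $\Sigma_m$ does not give you a parallel distribution on $N$, so the de~Rham decomposition cannot be invoked directly; you would need the normals $\nu_k$ to be parallel in open sets, not just along one leaf. Likewise, ``the leaves sweep out a neighbourhood as an isometric product'' begs the question: you need to know that nearby hypersurfaces are \emph{also} totally geodesic, which does not follow from information on $\Sigma_m$ alone.

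The paper closes this gap by a foliation argument of Bray--Brendle--Neves/Zhu type, and this is where the restriction $n\le 5$ actually enters. One first constructs (via the inverse function theorem) a local foliation $\{\Sigma_{m,t}\}$ of $\Sigma_m$ inside $\Sigma_{m-1}$ by leaves of constant weighted mean curvature. The crucial step is then to show that each leaf $\Sigma_{m,t}$ is again a \emph{minimizer} of the weighted area; this is done by a barrier/brane argument whose final algebraic inequality requires $n(m-1)<m(m+1)$, which is exactly what forces $n\le 5$. Once every leaf is a minimizer, the rigidity proposition applies to each $\Sigma_{m,t}$, the lapse is constant, and $\Sigma_{m-1}$ is locally (hence, by a continuity argument, globally) covered by $\Sigma_m\times\mathbb{R}$. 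One then checks $\mathcal{C}_{m-1}=0$ on $\Sigma_{m-1}$ and iterates. Your speculation that $n\le 5$ is about regularity or a Gauss--Bonnet input is off the mark: regularity is fine through $n\le 7$, and the restriction is purely the algebraic inequality needed to run the foliation-is-minimizing step. (A minor point: the cover $X\times\mathbb{R}^m\to N$ is infinite, not finite.)
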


When $m=n-1$, it reduces back to the classical rigidity on torus. As a direct corollary, we generalize \cite[Corollary 1.6]{BrendleHirschJohne2022} to the quasi-positive case when $n\leq 5$.
\begin{cor}\label{main-rigidity-quasi}
Suppose $N^n$ is a compact manifold such that there exists a non-zero degree map $f:N^n\to M^{n-m}\times \mathbb{T}^m$ for some compact manifold $M$ with $n\leq 5$, then $N$ does not admit metric such that $\mathcal{C}_m\geq 0$ on $N$ and $\mathcal{C}_m>0$ at some point $x\in N$.
\end{cor}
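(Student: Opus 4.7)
The plan is to derive a contradiction by combining the splitting result of Theorem \ref{main-rigidity} with a direct curvature computation on the Riemannian product. Suppose for contradiction that a metric $g$ on $N^n$ satisfies $\mathcal{C}_m \ge 0$ everywhere and $\mathcal{C}_m > 0$ at some point $x_0 \in N$. Then the hypotheses of Theorem \ref{main-rigidity} are satisfied, so I can invoke it to obtain a Riemannian covering
\[
\pi : (X^{n-m} \times \R^m, g_X + g_{\R^m}) \longrightarrow (N^n, g),
\]
which in particular is a local isometry.

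Next I would verify that on the product side, $\mathcal{C}_m$ vanishes identically when evaluated on any frame whose first $m$ vectors are tangent to the $\R^m$ factor. Pick a lift $\tilde{x}_0 = (y_0, z_0)$ of $x_0$, let $\{e_1,\ldots,e_m\}$ be any orthonormal basis of $T_{z_0}\R^m$, and complete it to an orthonormal basis of $T_{\tilde{x}_0}(X \times \R^m)$ by vectors $\{e_{m+1}, \ldots, e_n\} \subset T_{y_0} X$. In the sum defining $\mathcal{C}_m(e_1, \ldots, e_m)$, every term $\Rm(e_p, e_q, e_p, e_q)$ with $1 \le p \le m$ either has both vectors tangent to the flat factor, or involves one vector from each factor. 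In the first case the curvature vanishes because $\R^m$ is flat, and in the second case it vanishes because the mixed sectional curvatures of a Riemannian product are identically zero. Hence $\mathcal{C}_m(e_1, \ldots, e_m) = 0$ at $\tilde{x}_0$.

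Finally, pushing this frame forward by the local isometry $\pi$ yields an orthonormal basis $\{d\pi(e_i)\}$ of $T_{x_0} N$ on which $\mathcal{C}_m$ vanishes, contradicting the hypothesis that $\mathcal{C}_m(\cdot) > 0$ for \emph{every} orthonormal frame at $x_0$. There is essentially no obstacle beyond invoking Theorem \ref{main-rigidity}: the splitting theorem itself does all the heavy lifting, and the residual computation on the product is routine. The one mild subtlety worth flagging is that the argument relies on reading pointwise positivity in the strong sense (i.e., for every orthonormal frame at $x_0$), which is the convention dictated by the definition of $\mathcal{C}_m$ in the introduction.
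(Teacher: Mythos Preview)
Your proof is correct and follows essentially the same approach as the paper: invoke Theorem~\ref{main-rigidity} to obtain the product cover, then choose the first $m$ frame vectors tangent to the flat $\mathbb{R}^m$ factor to force $\mathcal{C}_m=0$ at the given point, contradicting strict positivity. The paper's version is simply a terser statement of the same argument.
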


It is also interesting to compare with the Ricci case i.e. $m=1$. For example, it is well-known that $\mathbb{S}^p\times \mathbb{S}^{1}$ cannot admit metric with $\mathcal{C}_1\equiv 0$ by the splitting theorem for $p=2$, $3$. Motivated by this, we combine Theorem~\ref{main-rigidity} with the celebrated works of Hamilton \cite{Hamilton3,Hamilton4} to give a classification of product manifolds with $\mathcal{C}_m\geq 0$ when $n-m\leq 3$.

\begin{cor}\label{main-topo-lowD}
Under the assumption in Theorem~\ref{main-rigidity}, if in addition $n-m\leq 3$, then $N$ is covered by $\mathbb{R}^n$, $\mathbb{S}^2\times \mathbb{R}^{n-2}$ or $\mathbb{S}^3\times \mathbb{R}^{n-3}$. Furthermore,
\begin{enumerate}\setlength{\itemsep}{1mm}
    \item [(i)] if $\mathrm{scal}(N)=0$, then $N$ is isometrically covered by $\mathbb{R}^n$;
    \item[(ii)] if $\mathrm{scal}(N)$ is positive at one point and $n-m=2$, then $N$ is isometrically covered by $(\mathbb{S}^2, g_S)\times \mathbb{R}^{n-2}$ for some metric $g_S$ on $\mathbb S^2$.
\end{enumerate}
\end{cor}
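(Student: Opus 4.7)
The plan is to combine Theorem~\ref{main-rigidity} with the Cheeger-Gromoll splitting theorem and Hamilton's classification of compact manifolds of dimension at most $3$ with non-negative Ricci curvature. By Theorem~\ref{main-rigidity}, $(N,g)$ is isometrically covered by $(X^{n-m}\times \mathbb{R}^m, g_X+g_{\mathbb{R}^m})$ with $X$ compact and $\Ric(g_X)\ge 0$, and the hypothesis $n-m\le 3$ forces $\dim X\le 3$. Passing to the universal cover $\tilde X$ of $X$, Cheeger-Gromoll (together with the cocompact action of the deck group preserving the de Rham decomposition) gives an isometric splitting $\tilde X=\mathbb{R}^k\times Y$ with $Y$ compact, simply connected, line-free, and with $\Ric(g_Y)\ge 0$. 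The cover of $N$ supplied by Theorem~\ref{main-rigidity} therefore factors through a cover $Y\times \mathbb{R}^{k+m}$ of $N$, reducing the statement to identifying $Y$.

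I then case-split on $\dim Y\in\{0,1,2,3\}$. Dimension $0$ yields $\tilde X\cong\mathbb{R}^{\dim X}$ and the cover $\mathbb{R}^n$; dimension $1$ is vacuous since no compact simply connected $1$-manifold exists; dimension $2$ gives $Y$ diffeomorphic to $\mathbb{S}^2$ by the classification of surfaces and therefore the cover $\mathbb{S}^2\times\mathbb{R}^{n-2}$. Dimension $3$ is the substantive case: here I would invoke Hamilton's classification \cite{Hamilton3,Hamilton4} via Ricci flow. The strong maximum principle applied to the evolving Ricci tensor implies that either $Y$ admits a local de~Rham splitting off an $\mathbb{R}$ factor, which together with the compactness of $Y$ and the Cheeger-Gromoll theorem would produce a line in $Y$ and hence a contradiction, or else $\Ric(g_Y)>0$ strictly after a short time. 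In the latter case Hamilton's convergence theorem \cite{Hamilton3} identifies $Y$ (topologically) with a spherical space form, and simple connectedness forces $Y\cong\mathbb{S}^3$, giving the cover $\mathbb{S}^3\times\mathbb{R}^{n-3}$.

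For refinement~(i), $\mathrm{scal}(N)\equiv 0$ together with the last sentence of Theorem~\ref{main-rigidity} gives $\Ric(g_X)\equiv 0$; since the Weyl tensor vanishes in dimension at most $3$, Ricci-flatness is equivalent to flatness, so $\tilde X=\mathbb{R}^{\dim X}$ and $N$ is isometrically covered by $\mathbb{R}^n$. For refinement~(ii), $n-m=2$ means $\dim X=2$, and the product structure gives $\mathrm{scal}(g)=\mathrm{scal}(g_X)=2K_{g_X}$, so the pointwise positivity of $\mathrm{scal}(N)$ upgrades $K_{g_X}\ge 0$ to $K_{g_X}>0$ at some point of $X$. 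This rules out the flat case, so the universal cover of $X$ is diffeomorphic to $\mathbb{S}^2$ carrying some metric $g_S$, producing the isometric cover $(\mathbb{S}^2,g_S)\times\mathbb{R}^{n-2}$.

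The main obstacle is extracting from Hamilton's work the precise statement needed: a compact simply connected $3$-manifold with non-negative Ricci curvature and no line is diffeomorphic to $\mathbb{S}^3$. Once this dimension-$3$ classification is in hand, the rest of the argument is a mechanical case analysis built on Theorem~\ref{main-rigidity} and Cheeger-Gromoll splitting.
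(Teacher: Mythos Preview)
Your proposal is correct and takes essentially the same approach as the paper: reduce via Theorem~\ref{main-rigidity} to the compact factor $X^{n-m}$ with $\Ric\ge 0$, then classify its universal cover using Gauss--Bonnet/surface classification in dimension~$2$ and Hamilton's Ricci flow results \cite{Hamilton3,Hamilton4} in dimension~$3$. The paper organizes the case split on $n-m$ and quotes Hamilton's trichotomy $\mathbb{R}^3$, $\mathbb{S}^2\times\mathbb{R}$, $\mathbb{S}^3$ for the universal cover directly, whereas you first peel off the Euclidean factor via Cheeger--Gromoll and then classify the simply connected compact piece $Y$; the substance is the same.
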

In particular, this says that $\mathbb{S}^{n-m}\times \mathbb{T}^m$ cannot admit metrics with $\mathcal{C}_m\equiv 0$ if $n-m=2$, $3$ and $n\leq 5$. 

The proof of Theorem~\ref{main-rigidity} relies on a variational argument of Zhu \cite{Zhu2020} in producing minimal foliation, see also the work of Bray, Brendle and Neves \cite{BrayBrendleNeves2010}.  The  restriction on the dimension in this work is in spirit similar to that in  \cite[Theorem 1.4]{BrendleHirschJohne2022} which is not due to the regularity issue in geometric measure theory. In view of Theorem 1.5 in the work \cite{BrendleHirschJohne2022} of Brendle, Hirsch and Johne, we expect that the rigidity still holds for all $n\leq 7$.

The organization of this paper is as follows. In Section 2, we gather some knowledge about stable weighed slicings, which is essentially all contained in \cite{BrendleHirschJohne2022}. It will be needed in the proof of our main result. In Section 3, we prove a foliation result under the assumption that $\mathcal C_m\ge 0$. In Section 4 we prove our main results.

\section{Results of Brendle-Hirsch-Johne}

In this section, we will outline some important results obtained in \cite{BrendleHirschJohne2022}. The precise estimates will be needed in the rigidity. We start with the concept of stable weighted slicing which was introduced in \cite{BrendleHirschJohne2022}.

\begin{defn}\label{defn:Stable weighted slicing of order m} (Definition 1.3 in \cite{BrendleHirschJohne2022})
Suppose $(N^n,g)$ be a Riemannian manifold and $1\leq m\leq n-1$. A stable weighted slicing of order $m$ consists of submanifolds $\Sigma_k$ and positive functions $\rho_k\in C^\infty(\Sigma_k)$, $0\leq k\leq m$, satisfying the following conditions:
\begin{enumerate}\setlength{\itemsep}{1mm}
\item [(a)]$\Sigma_0=N$ and $\rho_0=1$;
\item [(b)] For each $1\leq k\leq m$, $\Sigma_k$ is an embedded two-sided hypersurface in $\Sigma_{k-1}$, and is also a stable critical point of the $\rho_{k-1}$-weighted area:
$$\mathcal{H}^{n-k}_{\rho_{k-1}}(\Sigma)=\int_{\Sigma} \rho_{k-1} \, d\mu.$$
\item[(c)] For each $1\leq k\leq m$, the function $\frac{\rho_k}{\rho_{k-1}|_{\Sigma_k}}\in C^\infty(\Sigma_k)$ is a first eigenfunction of the stability operator associated with the $\rho_{k-1}$-weighted area with eigenvalue $\lambda_k\geq 0$.
\end{enumerate}
\end{defn}

Throughout this section, we are going to assume that a stable weighted slicing $\Sigma_m\subset \cdots\subset \Sigma_0=N$ exists and is fixed.
Following \cite{BrendleHirschJohne2022}, we will write $w_k=\log \frac{\rho_k}{\rho_{k-1}|_{\Sigma_k}}$ and denote the unit normal, second fundamental form and mean curvature of $\Sigma_{k}$ in $\Sigma_{k-1}$ by $\nu_{k}$, $h_{\Sigma_{k}}$ and $H_{\Sigma_{k}}$. Then $\Ric_{\Sigma_{k-1}}(\nu_k,\nu_k)$ is the normal Ricci curvature of $\Sigma_k$ in $\Sigma_{k-1}$. Moreover, we will use $ \mathrm{Rm}(X,Y,X,Y)$ to denote the sectional curvature (up to scaling) of the plane spanned by $X,Y\in TN$.

\begin{lma}[Lemma 3.4 in \cite{BrendleHirschJohne2022}]\label{lma:Stable-ineq}
For $m\geq 2$, if we denote
\begin{equation}
\left\{
\begin{array}{ll}
\displaystyle \Lambda=\sum_{k=1}^{m-1} \lambda_k;\\[5mm]
\displaystyle \mathcal{R}=\sum_{k=1}^m \Ric_{\Sigma_{k-1}}(\nu_k,\nu_k);\\[5mm]
\displaystyle \mathcal{G}=\sum_{k=1}^{m-1} \langle \nabla_{\Sigma_k}\log\rho_k,\nabla_{\Sigma_k}w_k \rangle;\\[5mm]
\displaystyle \mathcal{E}= \sum_{k=1}^m |h_{\Sigma_k}|^2 -\sum_{k=2}^m H_{\Sigma_k}^2,
\end{array}
\right.
\end{equation}
then
\begin{equation}\label{Stable-ineq 1}
\int_{\Sigma_m} \rho_{m-1}^{-1} \left(\Lambda+ \mathcal{R}+\mathcal{G}+\mathcal{E} \right)\; d\mu \leq 0.
\end{equation}
\end{lma}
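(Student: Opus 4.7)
The plan is twofold: for each level $1\le k\le m-1$ I would derive a pointwise Jacobi-type identity from the first-eigenfunction condition in Definition~2.1(c), while at the final level $k=m$ I would use the stability inequality directly with a constant test function. Writing $u_k:=\rho_k/(\rho_{k-1}|_{\Sigma_k})=e^{w_k}$, the stability operator of $\Sigma_k\subset\Sigma_{k-1}$ for the $\rho_{k-1}$-weighted area is the Bakry-\'Emery-type operator
\begin{equation*}
L_k \varphi = -\Delta_{\Sigma_k}\varphi - \langle \nabla_{\Sigma_k}\log\rho_{k-1},\nabla_{\Sigma_k}\varphi\rangle - V_k \varphi,
\end{equation*}
where $V_k = \Ric_{\Sigma_{k-1}}(\nu_k,\nu_k) + |h_{\Sigma_k}|^2 + Q_k$ and $Q_k$ collects the normal derivatives of $\log \rho_{k-1}$ produced by the second variation of the weighted area. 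For $1\le k\le m-1$, substituting $u_k$ into $L_k u_k = \lambda_k u_k$, dividing by $u_k$ and using the identity $u_k^{-1}\Delta_{\Sigma_k} u_k = \Delta_{\Sigma_k} w_k + |\nabla_{\Sigma_k} w_k|^2$, yields the key pointwise identity on $\Sigma_k$:
\begin{equation*}
\lambda_k + \Ric_{\Sigma_{k-1}}(\nu_k,\nu_k) + |h_{\Sigma_k}|^2 + Q_k + \Delta_{\Sigma_k} w_k + |\nabla_{\Sigma_k} w_k|^2 + \langle \nabla_{\Sigma_k}\log\rho_{k-1},\nabla_{\Sigma_k} w_k\rangle = 0.
\end{equation*}
At $k=m$, I would instead insert the constant test function $\varphi=1$ into the stability inequality of $\Sigma_m$, which, since $\lambda_m\ge 0$, produces $\int_{\Sigma_m} V_m\,\rho_{m-1}\,d\mu\le 0$.

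Next I would restrict each of the $k<m$ identities to $\Sigma_m$ (where they continue to hold pointwise since $\Sigma_m\subset\Sigma_k$), multiply by $\rho_{m-1}^{-1}$, integrate over $\Sigma_m$, and add the $k=m$ inequality. The Ricci pieces sum to $\mathcal{R}$, the $|h_{\Sigma_k}|^2$ terms contribute to $\mathcal{E}$, and the eigenvalues for $1\le k\le m-1$ sum to $\Lambda$, matching the statement precisely. Using the telescoping $\log\rho_{k-1}|_{\Sigma_m} = \sum_{i=1}^{k-1} w_i|_{\Sigma_m}$, the logarithmic gradient cross-terms regroup into $\mathcal{G}$. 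The criticality condition $H_{\Sigma_k}=\nu_k(\log\rho_{k-1})$ is used to absorb the $Q_k$ corrections into the $-\sum_{k=2}^m H_{\Sigma_k}^2$ piece of $\mathcal{E}$. Finally, the divergence pieces $\sum_{k<m}\rho_{m-1}^{-1}\Delta_{\Sigma_k} w_k$, once expanded in terms of $\Delta_{\Sigma_m}$ plus contributions from the intermediate slices and integrated via the weighted divergence theorem on the closed manifold $\Sigma_m$, vanish up to residual terms that are absorbed into $\mathcal{G}$ and $\mathcal{E}$. What remains on the right-hand side is the manifestly non-positive quantity $-\sum_{k=1}^{m-1}\int_{\Sigma_m}\rho_{m-1}^{-1}|\nabla_{\Sigma_k}w_k|^2\,d\mu$, which delivers the claimed inequality.

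The main obstacle I anticipate is the bookkeeping needed to relate $\Delta_{\Sigma_k}$ to $\Delta_{\Sigma_m}$ when restricting: the tangential Laplacian on $\Sigma_k$ differs from its $\Sigma_m$-counterpart by second fundamental form contributions of the intermediate slices $\Sigma_{k+1},\ldots,\Sigma_m$, and these corrections must be combined precisely with the $H_{\Sigma_j}^2$ contributions produced by the criticality condition to assemble the clean form $\mathcal{E}=\sum_{k=1}^m|h_{\Sigma_k}|^2-\sum_{k=2}^m H_{\Sigma_k}^2$. Getting all the signs, coefficients and cancellations right — and understanding why the bottom-level stability needs only $\varphi=1$ so that $\lambda_m$ does not contribute to $\Lambda$ — is the delicate analytic step at the heart of the Brendle-Hirsch-Johne computation.
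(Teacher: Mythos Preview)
Your plan has a concrete gap at the bottom level $k=m$. Inserting the constant test function $\varphi=1$ into the stability inequality for the $\rho_{m-1}$-weighted area gives
\[
\int_{\Sigma_m} \rho_{m-1}\,V_m\,d\mu \le 0,
\]
i.e.\ an inequality with weight $\rho_{m-1}$, not $\rho_{m-1}^{-1}$. You then propose to multiply the $k<m$ pointwise identities by $\rho_{m-1}^{-1}$ and add this $k=m$ inequality; the weights do not match, so the sum is meaningless. The correct choice---and this is exactly what is done in \cite[Lemma~3.3]{BrendleHirschJohne2022} and reproduced in the proof of Proposition~\ref{prop:minimial-folation}---is $\varphi=\rho_{m-1}^{-1}$. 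A short computation using $\Delta_{\Sigma_m}\rho_{m-1}^{-1}=\rho_{m-1}^{-1}\bigl(|\nabla_{\Sigma_m}\log\rho_{m-1}|^{2}-\Delta_{\Sigma_m}\log\rho_{m-1}\bigr)$ then turns the stability inequality into
\[
\int_{\Sigma_m}\rho_{m-1}^{-1}\Bigl(\Delta_{\Sigma_m}\log\rho_{m-1}+(\nabla^2_{\Sigma_{m-1}}\log\rho_{m-1})(\nu_m,\nu_m)\Bigr)\,d\mu
\;\ge\;
\int_{\Sigma_m}\rho_{m-1}^{-1}\Bigl(|h_{\Sigma_m}|^2+\Ric_{\Sigma_{m-1}}(\nu_m,\nu_m)\Bigr)\,d\mu,
\]
whose left-hand side already carries the factor $\rho_{m-1}^{-1}$ and, crucially, the quantity $\Delta_{\Sigma_{m-1}}\log\rho_{m-1}+H_{\Sigma_m}^2$ after using the first variation.

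There is also a structural inefficiency in your outline. You propose to integrate each level-$k$ identity over $\Sigma_m$ separately and then struggle with $\int_{\Sigma_m}\rho_{m-1}^{-1}\Delta_{\Sigma_k}w_k$, which forces you to compare $\Delta_{\Sigma_k}$ with $\Delta_{\Sigma_m}$ across several intermediate slices. The argument in \cite{BrendleHirschJohne2022} (visible in the proof of Proposition~\ref{prop:minimial-folation} here) avoids this entirely by telescoping \emph{pointwise before integrating}: from the eigenfunction equation one extracts, for each $1\le k\le m-1$,
\[
\Delta_{\Sigma_k}\log\rho_k
=\Delta_{\Sigma_{k-1}}\log\rho_{k-1}+H_{\Sigma_k}^2
-\bigl(\lambda_k+|h_{\Sigma_k}|^2+\Ric_{\Sigma_{k-1}}(\nu_k,\nu_k)+\langle\nabla_{\Sigma_k}\log\rho_k,\nabla_{\Sigma_k}w_k\rangle\bigr),
\]
and summing in $k$ with $\rho_0\equiv1$ gives a closed formula for $\Delta_{\Sigma_{m-1}}\log\rho_{m-1}$ on $\Sigma_{m-1}$. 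Substituting this single formula into the $k=m$ stability inequality above yields \eqref{Stable-ineq 1} directly, with no intermediate Laplacians to compare and no integration-by-parts bookkeeping on $\Sigma_m$.
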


When $m=1$, $\mathcal{C}_1\geq 0$ is equivalent to $\Ric_{\Sigma_0}=\Ric_{N}\geq0$. By taking a constant test function in the stability inequality of $\Sigma_{1}$, we immediately see that
\begin{equation}\label{m = 1 case}
\Ric_{\Sigma_0}(\nu_{1},\nu_{1}) = 0, \ \ h_{\Sigma_1} = 0  \ \ \text{on $\Sigma_{1}$}.
\end{equation}
When $\mathcal{C}_m\geq 0$ for $m\geq2$, it was shown that if in addition $n(m-2)\leq m^2-2$, then the sum in \eqref{Stable-ineq 1} can be expressed as a sum of intermediate curvature and a non-negative term.
\begin{lma}[Lemma 3.10, 3.11, 3.12 and 3.13 of \cite{BrendleHirschJohne2022}]\label{lma:BHJ-ineq}
Suppose $ n(m-2)\leq m^{2}-2$ and $m\geq 2$. For $x\in\Sigma_{m}$, let $\{e_{i}\}_{i=1}^{n}$ be an orthonormal basis of $T_{x}N$ such that $e_{k}=\nu_{k}$ for $1\leq k\leq m$. Then the following holds:
\begin{equation}
\mathcal{R}+\mathcal{G}+\mathcal{E} \geq \mathcal{C}_{m}(e_{1},\ldots,e_{m})+\sum_{k=1}^{m}\mathcal{V}_{k},
\end{equation}
where
\begin{equation*}
\begin{split}
\mathcal{V}_{1} & = |h_{\Sigma_{1}}|^{2}+\sum_{p=2}^{m}\sum_{q=p+1}^{n}\left(h_{\Sigma_{1}}(e_{p},e_{p})h_{\Sigma_{1}}(e_{q},e_{q})-h_{\Sigma_{1}}(e_{p},e_{q})^{2}\right), \\
\mathcal{V}_{k} & = |h_{\Sigma_{k}}|^{2}-\left(\frac{1}{2}-\frac{1}{2(k-1)}\right)H_{\Sigma_{k}}^{2} \\
& + \sum_{p=k+1}^{m}\sum_{q=p+1}^{n}\left(h_{\Sigma_{k}}(e_{p},e_{p})h_{\Sigma_{k}}(e_{q},e_{q})-h_{\Sigma_{k}}(e_{p},e_{q})^{2}\right) \ \ \text{for $2\leq k\leq m-1$}, \\[1mm]
\mathcal{V}_{m} & = |h_{\Sigma_{m}}|^{2}-\left(\frac{1}{2}-\frac{1}{2(m-1)}\right)H_{\Sigma_{m}}^{2}.
\end{split}
\end{equation*}
Moreover, for each term $\mathcal{V}_{k}$, the following estimates hold:
\begin{equation}
\begin{split}
\mathcal{V}_{1} & \geq \frac{m^{2}-2-n(m-2)}{2(m-1)(n-m)}\left(\sum_{p=2}^{m}h_{\Sigma_{1}}(e_{p},e_{p})\right)^{2}, \\
\mathcal{V}_{k} & \geq \frac{m^{2}-2-n(m-2)}{2(m-1)(n-m)}\left(\sum_{q=m+1}^{n}h_{\Sigma_{k}}(e_{q},e_{q})\right)^{2} \ \ \text{for $2\leq k\leq m-1$}, \\[2mm]
\mathcal{V}_{m} & \geq \frac{m^{2}-2-n(m-2)}{2(m-1)(n-m)}H_{\Sigma_{m}}^{2}.
\end{split}
\end{equation}
\end{lma}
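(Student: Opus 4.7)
The plan is to apply the Gauss equation iteratively to express $\mathcal{R}$ in terms of ambient curvature and second fundamental forms, then to match against $\mathcal{E}$ and $\mathcal{G}$ so that the ambient part reassembles into $\mathcal{C}_m$ while the remainder collects into the non-negative quantities $\mathcal{V}_k$.

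First, at a point $x\in\Sigma_m$, I would fix the orthonormal basis with $e_i=\nu_i$ for $1\leq i\leq m$ and $e_{m+1},\ldots,e_n$ tangent to $\Sigma_m$. Iterated Gauss gives, for each $1\leq k\leq m$,
\begin{equation*}
\Ric_{\Sigma_{k-1}}(\nu_k,\nu_k) = \sum_{j=k+1}^{n}\mathrm{Rm}(e_k,e_j,e_k,e_j) + Q_k,
\end{equation*}
where $Q_k$ is a quadratic expression in $h_{\Sigma_1},\ldots,h_{\Sigma_{k-1}}$ of the usual Gauss form. Summing over $k$, an elementary index count shows that the ambient part is exactly $\mathcal{C}_m(e_1,\ldots,e_m)$, since each sectional curvature $\mathrm{Rm}(e_p,e_q,e_p,e_q)$ with $1\leq p\leq m$ and $p<q\leq n$ is picked up precisely once.

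Next I would combine the quadratic remainders with $\mathcal{E}$ and $\mathcal{G}$. Here the key structural inputs are: (i) the weighted-minimality relation $H_{\Sigma_k}+\langle\nabla_{\Sigma_{k-1}}\log\rho_{k-1},\nu_k\rangle=0$, which trades $H_{\Sigma_k}^2$ terms against normal derivatives of $\log\rho_{k-1}$; and (ii) the tangential identity $\nabla_{\Sigma_k}\log\rho_k=\nabla_{\Sigma_k}w_k+\nabla_{\Sigma_k}\log\rho_{k-1}|_{\Sigma_k}$, which splits $\mathcal{G}$ into $\sum_k|\nabla_{\Sigma_k}w_k|^2$ plus cross terms that either absorb into Gauss squares or can be discarded by non-negativity. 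After this bookkeeping, the contribution at level $k$ should organize into exactly the $\mathcal{V}_k$ displayed in the lemma, modulo explicitly non-negative squares that only strengthen the inequality.

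Finally, for the pointwise bounds on each $\mathcal{V}_k$, I would diagonalize $h_{\Sigma_k}$ at $x$ by rotating inside $T_x\Sigma_k$. Each $\mathcal{V}_k$ then becomes a quadratic form in the diagonal entries $h_{ii}$, and, separating indices according to whether they lie in $\{k+1,\ldots,m\}$ or $\{m+1,\ldots,n\}$, the Cauchy--Schwarz inequality $(\sum_{i=1}^{r}a_i)^2\leq r\sum_{i=1}^{r}a_i^2$ applied with the right $r$ (the number of indices in the relevant block) produces the sharp coefficient $\frac{m^2-2-n(m-2)}{2(m-1)(n-m)}$, which is non-negative exactly under the hypothesis $n(m-2)\leq m^2-2$. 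I expect the main obstacle to be the bookkeeping in the middle step: matching the Gauss quadratic terms, the $\mathcal{G}$ cross terms, and the $H_{\Sigma_k}^2$ contributions from $\mathcal{E}$ requires careful tracking of signs and index ranges, whereas the algebraic optimization, once the form of each $\mathcal{V}_k$ is correctly identified, is a bounded calculus exercise.
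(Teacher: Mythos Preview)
Your overall architecture---Gauss equation to extract $\mathcal{C}_m$ from $\mathcal{R}$, then combining the quadratic remainders with $\mathcal{E}$ and the $H_{\Sigma_k}^2$ terms coming from $\mathcal{G}$---is exactly the route taken in \cite{BrendleHirschJohne2022}, which the present paper simply cites for this lemma (and partly reproduces when tracing the equality case in the proof of Proposition~\ref{Prop:rigidity} and in the Claim inside Proposition~\ref{prop:minimial-folation}). So at the level of strategy there is nothing to correct.

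There is, however, a genuine gap in your final step. You propose to ``diagonalize $h_{\Sigma_k}$ at $x$ by rotating inside $T_x\Sigma_k$'', but this is not available: the frame is constrained by $e_{k+1}=\nu_{k+1},\ldots,e_m=\nu_m$, and only the block $e_{m+1},\ldots,e_n$ spanning $T_x\Sigma_m$ is free to rotate. Hence for $1\leq k\leq m-1$ you cannot arrange $h_{\Sigma_k}$ to be diagonal in this basis, and the off-diagonal entries $h_{\Sigma_k}(e_p,e_q)$ with $k+1\leq p\leq m$ must be handled explicitly. The argument that actually works---visible in the paper's proof of Proposition~\ref{Prop:rigidity}, part (iv)---keeps all terms, uses $|h_{\Sigma_k}|^2=\sum_p h_{\Sigma_k}(e_p,e_p)^2+2\sum_{p<q}h_{\Sigma_k}(e_p,e_q)^2$ so that the factor $2$ dominates the subtracted off-diagonal squares appearing in $\mathcal{V}_k$, and then reduces to a quadratic form in the \emph{diagonal} entries alone. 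At that stage one splits into the two index blocks $\{k+1,\ldots,m\}$ and $\{m+1,\ldots,n\}$ and applies Cauchy--Schwarz as you indicate; the coefficient $\frac{m^2-2-n(m-2)}{2(m-1)(n-m)}$ then falls out. So your endgame is right once you drop the diagonalization claim and instead absorb the off-diagonal terms using the coefficient $2$ in $|h_{\Sigma_k}|^2$.
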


In particular,  we have the following equality when $\mathcal{C}_m$ is only non-negative.
\begin{lma}\label{V H C vanish}
Suppose $n(m-2)<m^{2}-2$ and $m\geq2$. For $x\in\Sigma_{m}$, let $\{e_{i}\}_{i=1}^{n}$ be an orthonormal basis of $T_{x}N$ such that $e_{k}=\nu_{k}$ for $1\leq k\leq m$. If $\mathcal{C}_{m}(e_{1},\ldots,e_{m})\geq 0$, then the followings hold:
\begin{equation}
\begin{split}
\mathcal{V}_{1} = {} & 0, \quad   \sum_{p=2}^{m}h_{\Sigma_{1}}(e_{p},e_{p}) = 0, \\
\mathcal{V}_{k} = {} & 0, \ \   \sum_{q=m+1}^{n}h_{\Sigma_{k}}(e_{q},e_{q}) = 0 \ \ \text{for $2\leq k\leq m-1$}, \\[3.5mm]
\mathcal{V}_{m} = {} & 0, \quad   H_{\Sigma_{m}} = 0, \quad \mathcal{C}_{m}(e_{1},\ldots,e_{m}) = 0.
\end{split}
\end{equation}
\end{lma}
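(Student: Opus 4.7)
The plan is to combine the integral inequality from Lemma~\ref{lma:Stable-ineq} with the pointwise decomposition from Lemma~\ref{lma:BHJ-ineq}, and then exploit the strict inequality $n(m-2) < m^{2}-2$ to peel off the individual vanishing statements.

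First I would observe that each piece entering Lemma~\ref{lma:Stable-ineq} is manifestly non-negative under our hypotheses. Indeed, $\Lambda = \sum_{k=1}^{m-1}\lambda_{k} \geq 0$ by Definition~\ref{defn:Stable weighted slicing of order m}(c), $\mathcal{C}_{m}(e_{1},\ldots,e_{m}) \geq 0$ by assumption, and each term $\mathcal{V}_{k}$ is non-negative by the quadratic lower bounds in the second half of Lemma~\ref{lma:BHJ-ineq}. Combining with the pointwise estimate $\mathcal{R} + \mathcal{G} + \mathcal{E} \geq \mathcal{C}_{m}(e_{1},\ldots,e_{m}) + \sum_{k=1}^{m} \mathcal{V}_{k}$ therefore gives
\begin{equation*}
0 \;\leq\; \Lambda + \mathcal{C}_{m}(e_{1},\ldots,e_{m}) + \sum_{k=1}^{m}\mathcal{V}_{k} \;\leq\; \Lambda + \mathcal{R} + \mathcal{G} + \mathcal{E}
\end{equation*}
pointwise on $\Sigma_{m}$, while Lemma~\ref{lma:Stable-ineq} forces $\int_{\Sigma_{m}} \rho_{m-1}^{-1}(\Lambda + \mathcal{R} + \mathcal{G} + \mathcal{E})\,d\mu \leq 0$. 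Since $\rho_{m-1}>0$, the integrand must vanish identically, and hence each non-negative piece vanishes: $\Lambda = 0$, $\mathcal{C}_{m}(e_{1},\ldots,e_{m}) = 0$, and $\mathcal{V}_{k} = 0$ for every $1 \leq k \leq m$ at every point of $\Sigma_{m}$.

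It then remains to extract the claimed identities on the second fundamental forms. This is where the strict inequality $n(m-2) < m^{2}-2$ is used: the coefficient $\frac{m^{2}-2-n(m-2)}{2(m-1)(n-m)}$ appearing in the lower bounds of Lemma~\ref{lma:BHJ-ineq} is then strictly positive, so from $\mathcal{V}_{k}=0$ one can divide through and conclude that the corresponding squared sum vanishes. This yields $\sum_{p=2}^{m} h_{\Sigma_{1}}(e_{p},e_{p}) = 0$ from $\mathcal{V}_{1}=0$, the analogous identity $\sum_{q=m+1}^{n} h_{\Sigma_{k}}(e_{q},e_{q}) = 0$ for $2 \leq k \leq m-1$ from $\mathcal{V}_{k}=0$, and $H_{\Sigma_{m}} = 0$ from $\mathcal{V}_{m}=0$.

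There is no real obstacle here: the argument is essentially an equality case of the chain of inequalities assembled in \cite{BrendleHirschJohne2022}, and the only place that requires care is verifying that the sign hypothesis $n(m-2) < m^{2}-2$ (strict, as opposed to the $\leq$ appearing in Lemma~\ref{lma:BHJ-ineq}) is exactly what is needed to invert the $\mathcal{V}_{k}$ estimates. With that in hand the conclusion follows immediately from the pointwise vanishing of the integrand.
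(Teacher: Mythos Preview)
Your proposal is correct and is precisely the argument the paper has in mind: the lemma is stated without a separate proof, and the opening lines of the proof of Proposition~\ref{Prop:rigidity} carry out exactly the same sandwich $0 \leq \Lambda + \mathcal{C}_{m} + \sum_k \mathcal{V}_k \leq \Lambda + \mathcal{R} + \mathcal{G} + \mathcal{E}$ against Lemma~\ref{lma:Stable-ineq}, then invoke the strict positivity of the coefficient $\frac{m^{2}-2-n(m-2)}{2(m-1)(n-m)}$ from Lemma~\ref{lma:BHJ-ineq} to read off the vanishing of the individual second-fundamental-form sums.
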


We now observe that the estimate in the rigid case can be made better by squeezing the equality case.
\begin{prop}\label{Prop:rigidity}
Under the assumption in Lemma~\ref{V H C vanish}, then the following is true:
\begin{enumerate}\setlength{\itemsep}{1mm}
\item [(i)] $\lambda_k=0$ for all $1\leq k\leq m-1$;
\item[(ii)] $\nabla_{\Sigma_{m-1}}\log\rho_{m-1}=0$ along $\Sigma_m$;
\item[(iii)]$\Ric_{\Sigma_{m-1}}(\nu_m,\nu_m)=\left(\nabla^2_{\Sigma_{m-1}}\log\rho_{m-1}\right) (\nu_m,\nu_m)$ on $\Sigma_m$;
\item[(iv)] $h_{\Sigma_k}=0$ for all $1\leq k\leq m$ on $\Sigma_m$;
\item[(v)] $ \mathcal{C}_{m}(e_{1},\ldots,e_{m}) = 0$;
\item[(vi)] $\mathrm{scal}(g|_{\Sigma_m})=\mathrm{scal}(N)\geq 0$.
\end{enumerate}
\end{prop}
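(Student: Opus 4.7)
The plan is to integrate the pointwise estimate of Lemma~\ref{lma:BHJ-ineq} against $\rho_{m-1}^{-1}$ and exploit the strictness of the hypothesis $n(m-2)<m^2-2$ to extract the equality case. This strictness makes each coefficient in the lower bounds for $\mathcal{V}_k$ positive, so $\mathcal{V}_k\geq 0$ pointwise on $\Sigma_m$. Together with $\mathcal{C}_m\geq 0$ on $N$ and $\lambda_k\geq 0$, the integrand $\rho_{m-1}^{-1}\bigl(\Lambda+\mathcal{C}_m+\sum_{k=1}^m\mathcal{V}_k\bigr)$ is pointwise non-negative, while Lemma~\ref{lma:Stable-ineq} forces its integral to be non-positive. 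Consequently every summand vanishes pointwise on $\Sigma_m$: this delivers (i) and (v), and via Lemma~\ref{V H C vanish} also $H_{\Sigma_m}=0$ together with the stated trace identities. In particular, the intermediate equality $\mathcal{R}+\mathcal{G}+\mathcal{E}=\mathcal{C}_m+\sum_k\mathcal{V}_k=0$ holds pointwise on $\Sigma_m$.

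For (iv), the case $k=m$ is immediate: $\mathcal{V}_m=0$ combined with $H_{\Sigma_m}=0$ forces $|h_{\Sigma_m}|^2=0$. For $1\leq k\leq m-1$ I would reopen the derivation of the lower bound $\mathcal{V}_k\geq 0$ in \cite{BrendleHirschJohne2022}, where $\mathcal{V}_k$ is rearranged by a Cauchy--Schwarz manoeuvre into a sum of non-negative quadratic forms in the entries of $h_{\Sigma_k}$. Equality in $\mathcal{V}_k=0$, combined with the freedom to rotate the orthonormal frame $\{e_{m+1},\ldots,e_n\}$ inside $T_x\Sigma_m$ and with the trace identities of Lemma~\ref{V H C vanish}, then pins every entry of $h_{\Sigma_k}$ to zero. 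This orthonormal-frame bookkeeping is the principal technical obstacle.

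With (iv) in hand, $\mathcal{E}\equiv 0$ and the pointwise identity $\mathcal{R}+\mathcal{G}=0$ on $\Sigma_m$ survives. The weighted minimality of $\Sigma_m$ in $\Sigma_{m-1}$ reads $H_{\Sigma_m}=\nu_m(\log\rho_{m-1})$, so $H_{\Sigma_m}=0$ kills the normal component of $\nabla_{\Sigma_{m-1}}\log\rho_{m-1}$ at points of $\Sigma_m$. Extracting the pointwise equality case of the stability identity of $\Sigma_m$ with first eigenfunction $u_m=e^{w_m}$ at eigenvalue $\lambda_m\geq 0$, after inserting $h_{\Sigma_m}=0$, then reduces to the Ricci--Hessian equality (iii). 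Repeating the analysis at level $m-1$ using $\lambda_{m-1}=0$ from (i) and the pointwise vanishing of $\mathcal{G}$ forces the tangential component of $\nabla_{\Sigma_{m-1}}\log\rho_{m-1}$ along $\Sigma_m$ to vanish as well, giving (ii).

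For (vi), with $h_{\Sigma_k}=0$ on $\Sigma_m$ for every $k$, iterating the codimension-one Gauss equation along $\Sigma_m\subset\Sigma_{m-1}\subset\cdots\subset\Sigma_0=N$ yields $\mathrm{scal}(g|_{\Sigma_m})=\mathrm{scal}(N)-2\mathcal{R}$ pointwise on $\Sigma_m$. The pointwise identity $\mathcal{R}+\mathcal{G}+\mathcal{E}=0$ with $\mathcal{E}=0$ and $\mathcal{G}=0$ forces $\mathcal{R}=0$, and hence $\mathrm{scal}(g|_{\Sigma_m})=\mathrm{scal}(N)$. Finally, the inequality $\mathrm{scal}(N)\geq 0$ follows directly from the hypothesis $\mathcal{C}_m\geq 0$ by averaging $\mathcal{C}_m(e_{\sigma(1)},\ldots,e_{\sigma(m)})$ over all permutations $\sigma$ of an orthonormal basis, since this average is a strictly positive multiple of $\mathrm{scal}(N)$.
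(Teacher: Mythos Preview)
Your overall strategy matches the paper's: integrate the pointwise estimate of Lemma~\ref{lma:BHJ-ineq} against $\rho_{m-1}^{-1}$, combine with Lemma~\ref{lma:Stable-ineq}, and use the strict inequality $n(m-2)<m^2-2$ to force every non-negative summand to vanish. This correctly yields (i), (v), and the data in Lemma~\ref{V H C vanish}; your plan for (iv) (tracing equality through the quadratic estimates on each $\mathcal{V}_k$) is exactly what the paper does, and the frame-rotation device you mention is not in fact needed there.

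The genuine gaps are in (ii) and (iii). For (ii), you invoke ``the pointwise vanishing of $\mathcal{G}$'' and claim this forces the tangential part of $\nabla_{\Sigma_{m-1}}\log\rho_{m-1}$ to vanish. Neither step is justified: from $\mathcal{R}+\mathcal{G}+\mathcal{E}=0$ and $\mathcal{E}=0$ you only have $\mathcal{R}+\mathcal{G}=0$, and even granting $\mathcal{G}=0$, the expression $\mathcal{G}=\sum_{k=1}^{m-1}\langle\nabla_{\Sigma_k}\log\rho_k,\nabla_{\Sigma_k}w_k\rangle$ does not by itself control $\nabla_{\Sigma_m}\log\rho_{m-1}$. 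The paper instead observes that pointwise equality in Lemma~\ref{lma:BHJ-ineq} forces equality in every sub-inequality of its proof, in particular in the bound from \cite[Lemma~3.7]{BrendleHirschJohne2022},
\[
\mathcal{G}\ \geq\ \sum_{k=2}^{m}\Bigl(\tfrac12+\tfrac{1}{2(k-1)}\Bigr)H_{\Sigma_k}^2+\tfrac{m}{2(m-1)}\,|\nabla_{\Sigma_m}\log\rho_{m-1}|^2,
\]
which directly gives $\nabla_{\Sigma_m}\log\rho_{m-1}=0$; the normal component then vanishes via the first variation and $H_{\Sigma_m}=0$. For (iii), you appeal to the eigenfunction $u_m=e^{w_m}$ at eigenvalue $\lambda_m$, but $\lambda_m=0$ is \emph{not} among the established equalities (only $\lambda_1,\ldots,\lambda_{m-1}$ enter $\Lambda$), so the eigenvalue equation does not collapse to (iii). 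The paper instead notes that equality also holds in the stability step \cite[Lemma~3.3]{BrendleHirschJohne2022} with the test function $\psi=\rho_{m-1}^{-1}$, which is now \emph{constant} by (ii); this kills the Laplacian term and yields the pointwise identity $|h_{\Sigma_m}|^2+\Ric_{\Sigma_{m-1}}(\nu_m,\nu_m)-(\nabla^2_{\Sigma_{m-1}}\log\rho_{m-1})(\nu_m,\nu_m)=0$, whence (iii) follows from (iv). Your (vi) is salvageable but cleaner done as in the paper: once $h_{\Sigma_k}\equiv 0$, the Gauss equation gives $\mathrm{scal}(g|_{\Sigma_m})=\mathrm{scal}(N)-2\,\mathcal{C}_m(e_1,\ldots,e_m)=\mathrm{scal}(N)$ directly, with no need to argue $\mathcal{G}=0$ separately.
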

\begin{proof}
This literally follows from tracing the proof of each inequalities in the work of \cite[Section 3]{BrendleHirschJohne2022}. We point them out one by one. Combining Lemma \ref{lma:Stable-ineq}, \ref{lma:BHJ-ineq}, $\Lambda\geq 0$ and $\mathcal{C}_{m}\geq0$, we obtain on $\Sigma_{m}$,
\begin{equation}
\Lambda = 0, \ \ \mathcal{C}_{m}(e_{1},\ldots,e_{m}) = 0, \ \ \mathcal{V}_k = 0 \ \ \text{for all $1\leq k\leq m$}.
\end{equation}
Then we obtain (i) and (v).
The proof of \cite[Lemma 3.7]{BrendleHirschJohne2022} actually shows
\begin{equation}
\mathcal{G} \geq \sum_{k=2}^{m}\left(\frac{1}{2}+\frac{1}{2(k-1)}\right)H_{\Sigma_{k}}^{2}+\frac{m}{2(m-1)}|\nabla_{\Sigma_{m}}\log\rho_{m-1}|^{2}.
\end{equation}
The equality holds only if $\nabla_{\Sigma_m}\log \rho_{m-1}\equiv 0$ and hence $\rho_{m-1}$ is constant along $\Sigma_m$. By the stability inequality of $\Sigma_{m}$, for any $\psi\in C^{\infty}(\Sigma_{m})$, we have
\begin{equation*}
\begin{split}
0 \leq {} & -\int_{\Sigma_{m}}\psi\Delta_{\Sigma_{m}}\psi\, d\mu \\
& -\int_{\Sigma_{m}}\left(|h_{\Sigma_{m}}|^{2}
+\Ric_{\Sigma_{m-1}}(\nu_{m},\nu_{m})
-\nabla_{\Sigma_{m-1}}^{2}\log\rho_{m-1}(\nu_{m},\nu_{m})\right)\psi^{2}\, d\mu.
\end{split}
\end{equation*}
The equality in \cite[Lemma 3.3]{BrendleHirschJohne2022} holds for the test function $\psi=\rho_{m-1}^{-1}$ which is now constant, we see that on $\Sigma_m$,
\begin{equation}\label{m-eignfunction}
|h_{\Sigma_m}|^2+\Ric_{\Sigma_{m-1}}(\nu_m,\nu_m)-\left(\nabla^2_{\Sigma_{m-1}}\log\rho_{m-1}\right)(\nu_m,\nu_m) = 0.
\end{equation}
It remains to show (iv) and (vi). Indeed, if $h_{\Sigma_k}\equiv 0$ for all $1\leq k\leq m$ on $\Sigma_m$, then \eqref{m-eignfunction} implies (iii), and (ii) follows from $\nabla_{\Sigma_m}\log \rho_{m-1}\equiv 0$ and the first variation of $\Sigma_{m}$:
\begin{equation}
H_{\Sigma_{m}} + \langle\nabla_{\Sigma_{m-1}}\log\rho_{m-1},\nu_{m}\rangle = 0.
\end{equation}
In fact, (iv) has been proved implicitly in \cite{BrendleHirschJohne2022}. It suffices to trace the equality case in the argument of \cite[Lemma 3.11, 3.12 and 3.13]{BrendleHirschJohne2022}.   For the reader's convenience, we include a proof here.

To show that $h_{\Sigma_{1}}\equiv0$ on $\Sigma_{m}$, we compute
\begin{equation*}
\begin{split}
\mathcal{V}_{1} = {} & |h_{\Sigma_{1}}|^{2}+\sum_{p=2}^{m}\sum_{q=p+1}^{n}\left(h_{\Sigma_{1}}(e_{p},e_{p})h_{\Sigma_{1}}(e_{q},e_{q})-h_{\Sigma_{1}}(e_{p},e_{q})^{2}\right) \\
= {} & \sum_{p=2}^{n}h_{\Sigma_{1}}(e_{p},e_{p})^{2}+2\sum_{p=2}^{n-1}\sum_{q=p+1}^{n}h_{\Sigma_{1}}(e_{p},e_{q})^{2} \\
& +\sum_{p=2}^{m}\sum_{q=p+1}^{n}h_{\Sigma_{1}}(e_{p},e_{p})h_{\Sigma_{1}}(e_{q},e_{q})
-\sum_{p=2}^{m}\sum_{q=p+1}^{n}h_{\Sigma_{1}}(e_{p},e_{q})^{2} \\
\geq {} & \sum_{p=2}^{n}h_{\Sigma_{1}}(e_{p},e_{p})^{2}+\sum_{p=2}^{m}\sum_{q=p+1}^{n}h_{\Sigma_{1}}(e_{p},e_{p})h_{\Sigma_{1}}(e_{q},e_{q})
+\sum_{p=2}^{n-1}\sum_{q=p+1}^{n}h_{\Sigma_{1}}(e_{p},e_{q})^{2}.
\end{split}
\end{equation*}
For the second term on the right hand side,
\begin{equation}
\begin{split}
& \sum_{p=2}^{m}\sum_{q=p+1}^{n}h_{\Sigma_{1}}(e_{p},e_{p})h_{\Sigma_{1}}(e_{q},e_{q}) \\
= {} & \sum_{p=2}^{m}h_{\Sigma_{1}}(e_{p},e_{p})\left(H_{\Sigma_{1}}-\sum_{q=2}^{p} h_{\Sigma_{1}}(e_{q},e_{q})\right) \\
= {} & \sum_{p=2}^{m}h_{\Sigma_{1}}(e_{p},e_{p})H_{\Sigma_{1}}-\sum_{p=2}^{m}\sum_{q=2}^{p}h_{\Sigma_{1}}(e_{p},e_{p})h_{\Sigma_{1}}(e_{q},e_{q}) \\
= {} & \sum_{p=2}^{m}h_{\Sigma_{1}}(e_{p},e_{p})H_{\Sigma_{1}}-\frac{1}{2}\sum_{p=2}^{m}h_{\Sigma_{1}}(e_{p},e_{p})^{2}
-\frac{1}{2}\left(\sum_{p=2}^{m}h_{\Sigma_{1}}(e_{p},e_{p})\right)^{2}.
\end{split}
\end{equation}
Then we have
\begin{equation}
\begin{split}
\mathcal{V}_{1} \geq {} & \frac{1}{2}\sum_{p=2}^{m}h_{\Sigma_{1}}(e_{p},e_{p})^{2}+\sum_{q=m+1}^{n}h_{\Sigma_{1}}(e_{q},e_{q})^{2}
+\sum_{p=2}^{n-1}\sum_{q=p+1}^{n}h_{\Sigma_{1}}(e_{p},e_{q})^{2} \\
& +\sum_{p=2}^{m}h_{\Sigma_{1}}(e_{p},e_{p})H_{\Sigma_{1}}-\frac{1}{2}\left(\sum_{p=2}^{m}h_{\Sigma_{1}}(e_{p},e_{p})\right)^{2}.
\end{split}
\end{equation}
Combining this with $H_{\Sigma_{1}}\equiv0$ and Lemma \ref{V H C vanish} ($\mathcal{V}_{1}\equiv0$, $\sum_{p=2}^{m}h_{\Sigma_{1}}(e_{p},e_{p})\equiv0$), we obtain $h_{\Sigma_{1}}\equiv0$.

To prove $h_{\Sigma_{k}}\equiv0$ on $\Sigma_{m}$, we compute
\begin{equation*}
\begin{split}
\mathcal{V}_{k} = {} & |h_{\Sigma_{k}}|^{2}-\left(\frac{1}{2}-\frac{1}{2(k-1)}\right)H_{\Sigma_{k}}^{2} \\
& + \sum_{p=k+1}^{m}\sum_{q=p+1}^{n}\left(h_{\Sigma_{k}}(e_{p},e_{p})h_{\Sigma_{k}}(e_{q},e_{q})-h_{\Sigma_{k}}(e_{p},e_{q})^{2}\right) \\
= {} & \sum_{p=k+1}^{n}h_{\Sigma_{1}}(e_{p},e_{p})^{2}+2\sum_{p=k+1}^{n-1}\sum_{q=p+1}^{n}h_{\Sigma_{k}}(e_{p},e_{q})^{2}
-\left(\frac{1}{2}-\frac{1}{2(k-1)}\right)H_{\Sigma_{k}}^{2} \\
& + \sum_{p=k+1}^{m}\sum_{q=p+1}^{n}h_{\Sigma_{k}}(e_{p},e_{p})h_{\Sigma_{k}}(e_{q},e_{q})-\sum_{p=k+1}^{m}\sum_{q=p+1}^{n}h_{\Sigma_{k}}(e_{p},e_{q})^{2} \\
\geq {} & \sum_{p=k+1}^{n}h_{\Sigma_{1}}(e_{p},e_{p})^{2}+\sum_{p=k+1}^{n-1}\sum_{q=p+1}^{n}h_{\Sigma_{k}}(e_{p},e_{q})^{2} \\
& -\left(\frac{1}{2}-\frac{1}{2(k-1)}\right)H_{\Sigma_{k}}^{2}+\sum_{p=k+1}^{m}\sum_{q=p+1}^{n}h_{\Sigma_{k}}(e_{p},e_{p})h_{\Sigma_{k}}(e_{q},e_{q}).
\end{split}
\end{equation*}
For the third term on the right hand side,
\begin{equation}
\begin{split}
& -\left(\frac{1}{2}-\frac{1}{2(k-1)}\right)H_{\Sigma_{k}}^{2} \\
= {} & -\left(\frac{1}{2}-\frac{1}{2(k-1)}\right)\left(\sum_{p=k+1}^{m}h_{\Sigma_{k}}(e_{p},e_{p})+\sum_{q=m+1}^{n}h_{\Sigma_{k}}(e_{q},e_{q})\right)^{2} \\
= {} & -\left(\frac{1}{2}-\frac{1}{2(k-1)}\right)\left(\sum_{p=k+1}^{m}h_{\Sigma_{k}}(e_{p},e_{p})\right)^{2} \\
& -\left(\frac{1}{2}-\frac{1}{2(k-1)}\right)\left(\sum_{q=m+1}^{n}h_{\Sigma_{k}}(e_{q},e_{q})\right)^{2} \\
& -\left(1-\frac{1}{k-1}\right)\left(\sum_{p=k+1}^{m}h_{\Sigma_{k}}(e_{p},e_{p})\right)\left(\sum_{q=m+1}^{n}h_{\Sigma_{k}}(e_{q},e_{q})\right).
\end{split}
\end{equation}
Then we have
\begin{equation}
\begin{split}
\mathcal{V}_{k} \geq I_{1}+I_{2},
\end{split}
\end{equation}
where
\begin{equation*}
\begin{split}
I_{1} = {} & \sum_{p=k+1}^{n}h_{\Sigma_{1}}(e_{p},e_{p})^{2}+\sum_{p=k+1}^{n-1}\sum_{q=p+1}^{n}h_{\Sigma_{k}}(e_{p},e_{q})^{2} \\
& +\frac{1}{2(k-1)}\left(\sum_{p=k+1}^{m}h_{\Sigma_{k}}(e_{p},e_{p})\right)^{2}
-\left(\frac{1}{2}-\frac{1}{2(k-1)}\right)\left(\sum_{q=m+1}^{n}h_{\Sigma_{k}}(e_{q},e_{q})\right)^{2} \\
& +\frac{1}{k-1}\left(\sum_{p=k+1}^{m}h_{\Sigma_{k}}(e_{p},e_{p})\right)\left(\sum_{q=m+1}^{n}h_{\Sigma_{k}}(e_{q},e_{q})\right).
\end{split}
\end{equation*}
and
\begin{equation*}
\begin{split}
I_{2} = {} & -\frac{1}{2}\left(\sum_{p=k+1}^{m}h_{\Sigma_{k}}(e_{p},e_{p})\right)^{2}
-\left(\sum_{p=k+1}^{m}h_{\Sigma_{k}}(e_{p},e_{p})\right)\left(\sum_{q=m+1}^{n}h_{\Sigma_{k}}(e_{q},e_{q})\right) \\
& +\sum_{p=k+1}^{m}\sum_{q=p+1}^{n}h_{\Sigma_{k}}(e_{p},e_{p})h_{\Sigma_{k}}(e_{q},e_{q}).
\end{split}
\end{equation*}
Observe that
\begin{equation}
\begin{split}
I_{2} = {} & -\frac{1}{2}\left(\sum_{p=k+1}^{m}h_{\Sigma_{k}}(e_{p},e_{p})^{2}
+2\sum_{p=k+1}^{m-1}\sum_{q=p+1}^{m}h_{\Sigma_{k}}(e_{p},e_{p})h_{\Sigma_{k}}(e_{q},e_{q})\right) \\
& +\sum_{p=k+1}^{m}h_{\Sigma_{k}}(e_{p},e_{p})\left(\sum_{q=p+1}^{n}h_{\Sigma_{k}}(e_{q},e_{q})-\sum_{q=m+1}^{n}h_{\Sigma_{k}}(e_{q},e_{q})\right) \\
= {} & -\frac{1}{2}\sum_{p=k+1}^{m}h_{\Sigma_{k}}(e_{p},e_{p})^{2}.
\end{split}
\end{equation}
Then
\begin{equation*}
\begin{split}
\mathcal{V}_{k} \geq {} & \frac{1}{2}\sum_{p=k+1}^{m}h_{\Sigma_{1}}(e_{p},e_{p})^{2}+\sum_{q=m+1}^{n}h_{\Sigma_{1}}(e_{q},e_{q})^{2}
+\sum_{p=k+1}^{n-1}\sum_{q=p+1}^{n}h_{\Sigma_{k}}(e_{p},e_{q})^{2} \\
& +\frac{1}{2(k-1)}\left(\sum_{p=k+1}^{m}h_{\Sigma_{k}}(e_{p},e_{p})\right)^{2}
-\left(\frac{1}{2}-\frac{1}{2(k-1)}\right)\left(\sum_{q=m+1}^{n}h_{\Sigma_{k}}(e_{q},e_{q})\right)^{2} \\
& +\frac{1}{k-1}\left(\sum_{p=k+1}^{m}h_{\Sigma_{k}}(e_{p},e_{p})\right)\left(\sum_{q=m+1}^{n}h_{\Sigma_{k}}(e_{q},e_{q})\right).
\end{split}
\end{equation*}
Combining this with Lemma \ref{V H C vanish} ($\mathcal{V}_{k}\equiv0$, $\sum_{q=m+1}^{n}h_{\Sigma_{k}}(e_{q},e_{q})\equiv0$), we obtain $h_{\Sigma_{k}}\equiv0$ for $2\leq k\leq m-1$.

When $k=m$, it is clear that $h_{\Sigma_{m}}\equiv0$ follows directly from the definition of $\mathcal{V}_{m}$ and Lemma \ref{V H C vanish} ($\mathcal{V}_{m}\equiv0$, $H_{\Sigma_{m}}\equiv0$). Then we obtain (iv).

For (vi), (iv) shows $(\Sigma_{m},g|_{\Sigma_{m}})\to(N,g)$ is totally geodesic and then
\[
\begin{split}
\mathrm{scal}(g|_{\Sigma_m}) = {} & 2\sum_{p=m+1}^{n-1}\sum_{q=p+1}^{n}\mathrm{Rm}_{\Sigma_{m}}(e_{p},e_{q},e_{p},e_{q}) \\
= {} & 2\sum_{p=m+1}^{n-1}\sum_{q=p+1}^{n}\mathrm{Rm}_{N}(e_{p},e_{q},e_{p},e_{q}) \\
= {} & 2\sum_{p=1}^{n-1}\sum_{q=p+1}^{n}\mathrm{Rm}_{N}(e_{p},e_{q},e_{p},e_{q})-2\sum_{p=1}^{m}\sum_{q=p+1}^{n}\mathrm{Rm}_{N}(e_{p},e_{q},e_{p},e_{q}) \\[3mm]
= {} & \mathrm{scal}(N)-2\mathcal{C}_{m}(e_{1},\ldots,e_{m}).
\end{split}
\]
Since $(N,g)$ has non-negative $m$-intermediate curvature, then $\mathrm{scal}(N)\geq0$. Combining this with $\mathcal{C}_{m}(e_{1},\ldots,e_{m})=0$, we obtain (vi).
\end{proof}

%\begin{rem}
%From (vi) of Proposition~\ref{Prop:rigidity}, we see that if $n-m=2$, then the connected component of $\Sigma_m^{n-m}$ is homeomorphic to either torus or sphere.
%\end{rem}

\section{Foliation of $\Sigma_m$ under $\mathcal{C}_m\geq 0$}

In this section, we will use the idea of Zhu \cite{Zhu2020} to show that if $\mathcal{C}_{m}(e_{1},\ldots,e_{m}) \geq 0$, then the submanifold $\Sigma_m$ in Definition~\ref{defn:Stable weighted slicing of order m} admits a local foliation $\{\Sigma_{m,t}\}_{-\e\leq t\leq \e}$ so that each $\Sigma_{m,t}$ is also a minimizer of the weighed area.  We first find the local foliation using the argument in \cite[Lemma 3.3]{Zhu2020}.

\begin{lma}\label{lma:local-foliation}
If $n(m-2)<m^{2}-2$, then we can find a local foliation $\{\Sigma_{m,t}\}_{-\e< t< \e}$ of $\Sigma_m$ in $\Sigma_{m-1}$ such that on each $\Sigma_{m,t}$ is given by the graph over $\Sigma_m$ with graph function $u_t$ along the unit normal $\nu_m$ such that
\begin{equation}
\Sigma_{m,0}=\Sigma_m, \quad \frac{\partial}{\partial t} \bigg|_{t=0} u_t=1, \quad
\frac{\p u_{t}}{\p t} > 0, \quad
\fint_{\Sigma_m}u_t\,d\mu =t
\end{equation}
and $H_{\Sigma_{m,t}}+\langle\nabla_{\Sigma_{m-1}}\log \rho_{m-1},\nu_{m,t}\rangle$ is constant on $\Sigma_{m,t}$, where $\nu_{m,t}$ is unit normal of $\Sigma_{m,t}$.
\end{lma}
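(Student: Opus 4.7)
The plan is to apply the implicit function theorem to the weighted prescribed mean curvature operator on normal graphs over $\Sigma_m$ inside $\Sigma_{m-1}$. Let $E\colon \Sigma_m \times (-\delta,\delta) \to \Sigma_{m-1}$ denote the normal exponential map and, for $u \in C^{2,\alpha}(\Sigma_m)$ with small $C^1$ norm, write $\Sigma_u = \{E(x,u(x)):x\in \Sigma_m\}$ with unit normal $\nu_u$ varying continuously from $\nu_m$. Define
\begin{equation*}
F(u)(x) := \bigl(H_{\Sigma_u} + \langle \nabla_{\Sigma_{m-1}}\log\rho_{m-1},\nu_u\rangle\bigr)\bigl(E(x,u(x))\bigr).
\end{equation*}
Because $\Sigma_m$ is a critical point of the $\rho_{m-1}$-weighted area, the first variation gives $F(0)\equiv 0$.

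A routine computation of the variation of $H_{\Sigma_u}$ and of $\nu_u$ along the normal deformation $\phi\nu_m$ yields
\begin{equation*}
\begin{split}
DF(0)[\phi] = {} & -\Delta_{\Sigma_m}\phi - \langle \nabla_{\Sigma_m}\log\rho_{m-1},\nabla_{\Sigma_m}\phi\rangle \\
& - \bigl(|h_{\Sigma_m}|^{2} + \Ric_{\Sigma_{m-1}}(\nu_m,\nu_m) - \nabla^{2}_{\Sigma_{m-1}}\log\rho_{m-1}(\nu_m,\nu_m)\bigr)\phi.
\end{split}
\end{equation*}
For $m\geq 2$, Proposition~\ref{Prop:rigidity}(ii), (iii), (iv) give, along $\Sigma_m$, that $h_{\Sigma_m}\equiv 0$, $\nabla_{\Sigma_{m-1}}\log\rho_{m-1}\equiv 0$, and $\Ric_{\Sigma_{m-1}}(\nu_m,\nu_m)=\nabla^{2}_{\Sigma_{m-1}}\log\rho_{m-1}(\nu_m,\nu_m)$; for $m=1$ the identities \eqref{m = 1 case} together with $\rho_{0}\equiv 1$ play the same role. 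In either case the zeroth order potential and the tangential drift vanish, so $DF(0)\phi=-\Delta_{\Sigma_m}\phi$.

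Now consider
\begin{equation*}
\Psi\colon C^{2,\alpha}(\Sigma_m)\times \mathbb{R}\to C^{0,\alpha}(\Sigma_m)\times \mathbb{R},\qquad \Psi(u,c)=\Bigl(F(u)-c,\ \tfrac{1}{|\Sigma_m|}\textstyle\int_{\Sigma_m}u\,d\mu\Bigr).
\end{equation*}
Then $\Psi(0,0)=(0,0)$ and $D\Psi(0,0)(\phi,\gamma)=\bigl(-\Delta_{\Sigma_m}\phi-\gamma,\tfrac{1}{|\Sigma_m|}\int\phi\,d\mu\bigr)$. Working component-wise on $\Sigma_m$ if necessary, this linear map is an isomorphism: injectivity follows from integrating $-\Delta\phi=\gamma$ to force $\gamma=0$ and then using the mean-zero condition to kill the remaining constant; surjectivity follows by choosing $\gamma=-\tfrac{1}{|\Sigma_m|}\int f\,d\mu$ for a given datum $f$, solving $-\Delta_{\Sigma_m}\phi_0=f+\gamma$ by Fredholm theory, and tuning an additive constant to meet the prescribed mean. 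The implicit function theorem thus produces $\varepsilon>0$ and smooth families $u_t\in C^{2,\alpha}(\Sigma_m)$, $c(t)\in\mathbb{R}$ with $\Psi(u_t,c(t))=(0,t)$, giving the prescribed mean and the constancy of $H_{\Sigma_{m,t}}+\langle \nabla_{\Sigma_{m-1}}\log\rho_{m-1},\nu_{m,t}\rangle$ on each $\Sigma_{m,t}:=\Sigma_{u_t}$.

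Differentiating $\Psi(u_t,c(t))=(0,t)$ at $t=0$ gives $-\Delta_{\Sigma_m}(\partial_t u_0)=c'(0)$ and $\tfrac{1}{|\Sigma_m|}\int \partial_t u_0\,d\mu=1$; integrating the first identity forces $c'(0)=0$, whence $\partial_t u_0$ is harmonic, hence constant, and the normalization gives $\partial_t u_0\equiv 1>0$. Shrinking $\varepsilon$ using continuity of $(t,x)\mapsto \partial_t u_t(x)$ yields $\partial_t u_t>0$ everywhere, which is the foliation property. The main obstacle in this approach is the step reducing $DF(0)$ to $-\Delta_{\Sigma_m}$: the kernel of $DF(0)$ has to contain the constants in order for the IFT bookkeeping above to close, and this collapse of the stability operator to a pure Laplacian is only available because the full rigidity from Section~2 (vanishing of $h_{\Sigma_m}$, tangential drift, and the equality between normal Ricci and the Hessian of $\log\rho_{m-1}$) is already in force under $\mathcal C_m\geq 0$ and $n(m-2)<m^{2}-2$.
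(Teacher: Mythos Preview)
Your proof is correct and follows essentially the same approach as the paper: both set up a map whose linearization, after invoking the rigidity from Proposition~\ref{Prop:rigidity} (or \eqref{m = 1 case} when $m=1$), collapses to $-\Delta_{\Sigma_m}$, and then apply the inverse/implicit function theorem. The only cosmetic difference is that the paper projects onto mean-zero functions by considering $\tilde H_u-\fint\tilde H_u$ as a map into $\mathring C^{\alpha}(\Sigma_m)\times\mathbb R$, whereas you carry the constant $c$ as an extra unknown; the two formulations are equivalent and the remaining verifications ($\partial_t u_t|_{t=0}\equiv 1$, shrinking $\varepsilon$) are identical.
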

\begin{proof}
This follows by modifying the proof of \cite[Lemma 3.3]{Zhu2020} slightly.  We include a sketch for the reader's convenience. For $u\in C^{2,\a}(\Sigma_m)$, let $\Sigma_{u}$ be the graph over $\Sigma_m$ with graph function $u_t$ along the unit normal $\nu_m$, and $\nu_{u}$ be the unit normal of $\Sigma_{u}$. Consider the map $\Psi: C^{2,\a}(\Sigma_m)\to \mathring{C}^\a(\Sigma_m)\times \mathbb{R}$ given by
\begin{equation}
\Psi(u)=\left(\tilde H_u-\fint_{\Sigma_m}\tilde H_u \,d\mu, \ \fint_{\Sigma_m}u \,d\mu  \right)
\end{equation}
where $\mathring{C}^\a(\Sigma_m)$ is the space of function $f$ in $C^\a(\Sigma_m)$ such that $\int_{\Sigma_m}f\,d\mu=0$ and $\tilde H_{\Sigma_{u}}=H_{\Sigma_{u}}+\langle\nabla_{\Sigma_{m-1}}\log \rho_{m-1},\nu_{u}\rangle$ is the sum of mean curvature and a twist term given by the inner product.

By the second variation formula (for instances see \cite[Proposition 2.3]{BrendleHirschJohne2022}) and \eqref{m = 1 case} when $m=1$ or (iii), (iv) in Proposition~\ref{Prop:rigidity} when $m\geq2$, it is clear that $\Psi(0)=(0,0)$ and the linearised operator of $\Psi$ at $u=0$ is given by
\begin{equation}
D\Psi\big|_{u=0}(v)=\left(-\Delta_{\Sigma_m} v, \, \fint_{\Sigma_m}v \,d\mu\right)\in  \mathring{C}^\a(\Sigma_m)\times \mathbb{R}
\end{equation}
is invertible. By inverse function theorem, we can find a family $u_t: \Sigma_m\to \mathbb{R}$ for $t\in (-\e,\e)$ such that $\Psi(u_{t})=(0,t)$ and $u_{0}=0$. Denote $\Sigma_{u_{t}}$ by $\Sigma_{m,t}$. It follows that
\begin{equation}
\Sigma_{m,0}=\Sigma_m, \quad \frac{\partial}{\partial t} \bigg|_{t=0} u_t=1, \quad \fint_{\Sigma_m}u_t\,d\mu =t
\end{equation}
and $H_{\Sigma_{m,t}}+\langle\nabla_{\Sigma_{m-1}}\log \rho_{m-1},\nu_{m,t}\rangle$ is constant on $\Sigma_{m,t}$. By shrinking $\e$, we may assume $\partial_t u_t>0$. This completes the proof.
\end{proof}

Next, we want to show that $\Sigma_{m,t}$ obtained from Lemma~\ref{lma:local-foliation} are all local minimizer. To establish this, we need to construct competitors by modifying \cite[Section 4]{BrendleHirschJohne2022}. From now on, we assume working in the setting of \cite[Theorem 1.5]{BrendleHirschJohne2022}.  More precisely, we consider a compact manifold $N^n$ such that it admits a metric with $\mathcal{C}_m\geq 0$ for some $1\leq m\leq n-1$ and admits a non-zero degree map $f:N^n\to M^{n-m}\times \mathbb{T}^m$ for some compact manifold $M^{n-m}$. In \cite[Theorem 1.5]{BrendleHirschJohne2022},  it is proved that the stable weighed slicing in Definition~\ref{defn:Stable weighted slicing of order m} exists. Let $\Sigma_m$ be a submanifold obtained from this and $\{\Sigma_{m,t}\}_{-\e<t<\e}$ be the corresponding foliation obtained using Lemma~\ref{lma:local-foliation}.
\begin{prop}\label{prop:minimial-folation}
If $n(m-2)<m^{2}-2$ and $n(m-1)<m(m+1)$, then for each $t\in (-\e,\e)$,  we have
\begin{equation}\label{constant-weighted-area}
\mathcal{H}^{n-m}_{\rho_{m-1}}(\Sigma_{m,t})=\mathcal{H}^{n-m}_{\rho_{m-1}}(\Sigma_m)=\int_{\Sigma_m} \rho_{m-1} \, d\mu.
\end{equation}
In particular, each $\Sigma_{m,t}$ satisfies the conclusions in Proposition~\ref{Prop:rigidity}.
\end{prop}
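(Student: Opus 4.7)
My plan is to show $\tilde H_t \equiv 0$ along the foliation, whence by Lemma~\ref{lma:local-foliation} the first variation formula $A'(t) = \tilde H_t \int_{\Sigma_{m,t}}\rho_{m-1}(\partial_t u_t)\,d\mu$ (with $A(t):=\mathcal{H}^{n-m}_{\rho_{m-1}}(\Sigma_{m,t})$) gives the claim. Observe that Proposition~\ref{Prop:rigidity}(ii),(iv) yield $\tilde H_0 = H_{\Sigma_m} + \langle\nabla_{\Sigma_{m-1}}\log\rho_{m-1},\nu_m\rangle = 0$, so the question reduces to propagating this identity to all $t \in (-\varepsilon,\varepsilon)$.

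I would establish $A(t) \equiv A(0)$ by a two-sided squeeze. For the lower bound, in the setting of \cite[Theorem 1.5]{BrendleHirschJohne2022} the slice $\Sigma_m$ is produced as a minimizer of the $\rho_{m-1}$-weighted area in a fixed homology class in $\Sigma_{m-1}$ (coming from the non-zero degree map hypothesis). Since $\Sigma_{m,t}$ is a small normal graph it is homologous to $\Sigma_m$, so $A(t) \geq A(0)$ for $|t| < \varepsilon$.

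For the matching upper bound $A(t) \leq A(0)$, I would follow the strategy of Zhu~\cite{Zhu2020}. Reparametrize by the weighted flux $V(t) = \int_0^t \int_{\Sigma_{m,s}}\rho_{m-1}(\partial_s u_s)\,d\mu\,ds$, so that $dA/dV = \tilde H$. The evolution equation for the weighted mean curvature along the normal deformation with lapse $\phi = \partial_t u_t$, after integrating against $\rho_{m-1}\,d\mu$ on the closed leaf $\Sigma_{m,t}$ (the weighted Laplacian term integrates to zero by the divergence theorem), gives
\begin{equation*}
\tilde H'(t)\int_{\Sigma_{m,t}}\rho_{m-1}\,d\mu = -\int_{\Sigma_{m,t}}\bigl(|h_{\Sigma_{m,t}}|^2 + \Ric_{\Sigma_{m-1}}(\nu_{m,t},\nu_{m,t}) - \nabla^2_{\Sigma_{m-1}}\log\rho_{m-1}(\nu_{m,t},\nu_{m,t})\bigr)\phi\,\rho_{m-1}\,d\mu.
\end{equation*}
The integrand can then be controlled by applying the Brendle--Hirsch--Johne analysis (Lemma~\ref{lma:BHJ-ineq}) to the slicing with top leaf $\Sigma_{m,t}$, combined with $\mathcal C_m \geq 0$ and $\phi > 0$. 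This should yield a one-sided differential inequality forcing $\tilde H$ to be monotone through $0$ as a function of $V$, hence $A(t) \leq A(0)$.

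Combining the two bounds gives $A(t) \equiv A(0)$, hence $\tilde H_t \equiv 0$, so each $\Sigma_{m,t}$ is a genuine (not merely CMC) critical point of the $\rho_{m-1}$-weighted area. The family itself provides a positive normal test function $\phi = \partial_t u_t$ on each leaf, upgrading each $\Sigma_{m,t}$ to a stable critical point of the weighted area in the sense of Definition~\ref{defn:Stable weighted slicing of order m} (cf.\ \cite{BrayBrendleNeves2010}), so Proposition~\ref{Prop:rigidity} applies verbatim to every leaf. The main obstacle is the upper-bound step: one must relate the pointwise expression $|h_{\Sigma_{m,t}}|^2 + \Ric_{\Sigma_{m-1}}(\nu_{m,t},\nu_{m,t}) - \nabla^2_{\Sigma_{m-1}}\log\rho_{m-1}(\nu_{m,t},\nu_{m,t})$ on the perturbed leaf to $\mathcal C_m$ through the full slicing, keeping track of the second fundamental forms of the lower leaves which are no longer known to vanish along $\Sigma_{m,t}$ when $t\neq 0$; this is precisely where the sharper dimension hypothesis $n(m-1) < m(m+1)$, strictly stronger than the BHJ condition $n(m-2) < m^2 - 2$, is expected to enter.
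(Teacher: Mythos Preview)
Your lower bound $A(t)\ge A(0)$ via homological minimality is exactly what the paper uses. The genuine gap is in your upper-bound step. You propose to differentiate $\tilde H_t$ along the CMC foliation and control the integrand
\[
Q:=|h_{\Sigma_{m,t}}|^{2}+\Ric_{\Sigma_{m-1}}(\nu_{m,t},\nu_{m,t})-\nabla^{2}_{\Sigma_{m-1}}\log\rho_{m-1}(\nu_{m,t},\nu_{m,t})
\]
via Lemma~\ref{lma:BHJ-ineq}. But the BHJ machinery (Lemma~\ref{lma:Stable-ineq}) is an \emph{integral} inequality that hinges on plugging the specific test function $\rho_{m-1}^{-1}$ into the \emph{stability} inequality of the top leaf; this is what makes the tangential term $\Delta_{\Sigma_{m,t}}\log\rho_{m-1}$ integrate away. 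Your leaves $\Sigma_{m,t}$ are only constant--weighted--mean--curvature, not stable, so that test function is unavailable. If instead you follow your evolution equation and integrate against $\phi\,\rho_{m-1}$, the pointwise BHJ identities give
\[
Q=\hat\Lambda+\hat{\mathcal R}+\hat{\mathcal G}+\hat{\mathcal E}+\Delta_{\Sigma_{m,t}}\log\rho_{m-1},
\]
and after bounding the first four terms below by $c\,\tilde H_t^{2}$ you are left with $\int_{\Sigma_{m,t}}\phi\,\rho_{m-1}\,\Delta_{\Sigma_{m,t}}\log\rho_{m-1}\,d\mu$, which has no sign (e.g.\ for constant $\phi$ it equals $-\phi\int\rho_{m-1}|\nabla\log\rho_{m-1}|^{2}\le 0$). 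So the differential inequality you want for $\tilde H$ does not follow; this is a weight mismatch, not the issue with lower second fundamental forms that you flagged.

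The paper circumvents this by a different mechanism. Assuming $\tilde H_\tau>2\delta$ for some $\tau>0$, it minimizes the \emph{brane functional}
\[
\mathcal B(\hat\Omega)=\int_{\partial\hat\Omega\setminus\Sigma_m}\rho_{m-1}\,d\mu-\delta\int_{\hat\Omega}\rho_{m-1}\,dV
\]
between $\Sigma_m$ and $\Sigma_{m,\tau}$ (which serve as barriers), producing a smooth hypersurface $\hat\Sigma_m$ that is genuinely \emph{stable} and has $\tilde H_{\hat\Sigma_m}=\delta$. On $\hat\Sigma_m$ one may now legitimately take $\rho_{m-1}^{-1}$ in the stability inequality, and the BHJ computation runs through with an extra $\delta$ in the first variation. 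The modified top term becomes
\[
\hat{\mathcal V}_m\ge \frac{m^{2}-2-n(m-2)}{2(n-m)(m-1)}H_{\hat\Sigma_m}^{2}-\frac{\delta}{m-1}H_{\hat\Sigma_m}+\frac{m\delta^{2}}{2(m-1)},
\]
and minimizing this quadratic in $H_{\hat\Sigma_m}$ is precisely where $n(m-1)<m(m+1)$ enters to force $\hat{\mathcal V}_m>0$, contradicting $\int\rho_{m-1}^{-1}\hat{\mathcal V}_m\le 0$. This brane--minimization idea is the missing ingredient in your sketch.
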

\begin{proof}
The argument is based on modifying that of \cite[Proposition 3.4]{Zhu2020} using the idea of \cite{BrendleHirschJohne2022}.

We first recall some set-up in the proof of \cite[Theorem 1.5]{BrendleHirschJohne2022}. We may assume $\mathrm{deg}(f)>0$. Since $f:N\to M\times \mathbb{T}^m$ is a map with non-zero degree, we obtain the following maps
\begin{equation}
f_{0}: N\to M, \ \ f_i: N\to \mathbb{S}^1 \ \ i = 1,\ldots,m,
\end{equation}
where $f_0$ and $f_{i}$ correspond the projection from $M\times \mathbb{T}^m$ to $M$ and $i$-th torus $\mathbb{T}$ respectively. Then we choose a top-dimensional form $\Theta$, $\theta$ in $M$, $\mathbb{S}^1$ respectively, and define $\Omega=f_0^*\Theta$, $\omega_i=f_i^*\theta$ on $N$ so that $\int_{N} \omega_1\wedge ...\wedge \omega_m \wedge \Omega=\mathrm{deg}(f)$.

Next, we will show \eqref{constant-weighted-area} for $t\in(0,\e)$. When $t\in(-\e,0)$, the argument is similar. Recall that $\rho_0=1$, $\Sigma_0=N$ and the $\Sigma_k$ is obtained inductively by minimizing the weighed area inside the class $\mathcal{A}_k$ which contains all $(n-k)$ integer rectifiable current $\Sigma$ in $\Sigma_{k-1}$ with $\int_\Sigma \omega_{k+1}\wedge ...\wedge \omega_m \wedge \Omega=\mathrm{deg}(f)$. By Stokes Theorem, it is clear that $\Sigma_{m,t}\in \mathcal{A}_m$ and hence,
\begin{equation}
\begin{split}
0&\leq  \int_{\Sigma_{m,t}}\rho_{m-1} \;d\mu - \int_{\Sigma_{m,0}}\rho_{m-1} \;d\mu \\
 &\leq \int^t_0 \int_{\Sigma_{m,s}} \rho_{m-1} u_s \left( H_{m,s}  +\langle\nabla_{\Sigma_{m-1}}\log \rho_{m-1},\nu \rangle\right)d\mu \, ds.
\end{split}
\end{equation}
Therefore, to show \eqref{constant-weighted-area}, it suffices to show that
\begin{equation}\label{prop:minimial-folation goal}
H_{\Sigma_{m,s}}+\langle\nabla_{\Sigma_{m-1}}\log \rho_{m-1},\nu_{m,s} \rangle\leq 0 \ \ \text{for $s\in(0,\e)$}.
\end{equation}
Suppose on the contrary, there exists $\tau\in (0,\e)$ and $\delta>0$ such that
\begin{equation}
H_{\Sigma_{m,\tau}}+\langle\nabla_{\Sigma_{m-1}}\log \rho_{m-1},\nu_{m,\tau} \rangle > 2\delta > 0.
\end{equation}
We consider the following minimization problem. Define the twisted brane functional
\begin{equation}
\mathcal{B}(\hat \Omega)=\int_{\partial \hat \Omega\setminus \Sigma_m}\rho_{m-1} \,d\mu -\delta \int_{\hat \Omega}\rho_{m-1} \,dV
\end{equation}
for any Borel subset $\hat \Omega$ of the region between $\Sigma_m$ and $\Sigma_{m,\tau}$, with finite perimeter and $\Sigma_m\subset \partial\hat\Omega$. Due to the choice of $\delta$,  the hypersurface $\Sigma_m$ and $\Sigma_{m,\tau}$ serve as the barriers. Therefore, we can find a Borel set $\hat \Omega$ which is a local minimizer of $\mathcal{B}$ which $\hat\Sigma_m=\partial\hat\Omega\setminus \Sigma_m$ is a smooth $2$-sided hypersurface disjoint from $\Sigma_m$ and $\Sigma_{m,\tau}$.  Moreover, the first and second variation of $\hat \Omega$ implies that
\begin{equation}\label{1st and 2nd variation}
\left\{
\begin{array}{ll}
H_{\hat \Sigma_m}+\langle\nabla_{\Sigma_{m-1}}\log \rho_{m-1},\hat\nu_m \rangle=\delta \ \ \text{on $\hat\Sigma_m$};\\[2mm]
\displaystyle \int_{\hat \Sigma_m} \rho_{m-1}\cdot f \cdot L(f)\, d\mu \geq 0 \ \ \text{for any $f\in C^\infty(\hat \Sigma)$},
\end{array}
\right.
\end{equation}
where $L(f)$ is the stability operator given by
\begin{equation}
\begin{split}
L(f)&=-\Delta_{\hat\Sigma_m}f- \left(|h_{\hat{\Sigma}_{m}}|^2+\Ric_{\Sigma_{m-1}}(\hat\nu_m,\hat \nu_m) \right)f\\[2mm]
&\quad + \left( \nabla^2_{\Sigma_{m-1}}\log\rho_{m-1}(\hat\nu_m,\hat\nu_m) \right)f-\langle \nabla_{\hat\Sigma_m}\log\rho_{m-1},\nabla_{\hat\Sigma_m} f\rangle.
\end{split}
\end{equation}
Compared to $\Sigma_{m}$, we observe that $\hat\Sigma_m$ satisfies the same second variation while the first variation involve an extra $\delta>0$.  This allow us to carry out similar analysis as in \cite{BrendleHirschJohne2022} with the modification accommodating the twisting from the mean curvature on $\hat\Sigma_m$. Since $\hat\Sigma_m$ is the hypersurface of $\Sigma_{m-1}$, it suffices to modify the argument of \cite[Section 3]{BrendleHirschJohne2022} which uses $H_{\Sigma_m}+\langle\nabla_{\Sigma_{m-1}}\log \rho_{m-1},\nu_m\rangle=0$ on $\Sigma_m$.

We start with the stability inequality on $\hat \Sigma_m$, i.e. the analogous inequality in \cite[Lemma 3.3]{BrendleHirschJohne2022}. Taking $f=\rho_{m-1}^{-1}$ in \eqref{1st and 2nd variation}, we obtain
\begin{equation}
\begin{split}
& \int_{\hat{\Sigma}_{m}}\rho_{m-1}^{-1}\left(\Delta_{\hat{\Sigma}_{m}}\log\rho_{m-1}+(\nabla_{\hat{\Sigma}_{m-1}}^{2}\log\rho_{m-1})(\hat{\nu}_{m},\hat{\nu}_{m})\right)d\mu \\
\geq {} & \int_{\hat{\Sigma}_{m}}\rho_{m-1}^{-1}\left(|h_{\hat{\Sigma}_{m}}|^{2}+\Ric_{\Sigma_{m-1}}(\hat{\nu}_{m},\hat{\nu}_{m})\right)d\mu.
\end{split}
\end{equation}
Note that
\begin{equation}
\begin{split}
& \Delta_{\hat{\Sigma}_{m}}\log\rho_{m-1}+(\nabla_{\hat{\Sigma}_{m-1}}^{2}\log\rho_{m-1})(\hat{\nu}_{m},\hat{\nu}_{m}) \\[1mm]
= {} & \Delta_{\Sigma_{m-1}}\log\rho_{m-1}-H_{\hat{\Sigma}_{m}}\langle \nabla_{\Sigma_{m-1}}\log\rho_{m-1},\hat{\nu}_{m} \rangle \\[1mm]
= {} & \Delta_{\Sigma_{m-1}}\log\rho_{m-1}+H_{\hat{\Sigma}_{m}}^{2}-\delta H_{\hat{\Sigma}_{m}}.
\end{split}
\end{equation}
It then follows that
\begin{equation}\label{new-stability 1}
\begin{split}
& \int_{\hat{\Sigma}_{m}}\rho_{m-1}^{-1}\left(\Delta_{\Sigma_{m-1}}\log\rho_{m-1}+H_{\hat{\Sigma}_{m}}^{2}-\delta H_{\hat{\Sigma}_{m}}\right)d\mu \\
\geq {} & \int_{\hat{\Sigma}_{m}}\rho_{m-1}^{-1}\left(|h_{\hat{\Sigma}_{m}}|^{2}+\Ric_{\Sigma_{m-1}}(\hat{\nu}_{m},\hat{\nu}_{m})\right)d\mu.
\end{split}
\end{equation}
If $m=1$, then $\mathcal{C}_{1}\geq0$ is equivalent to $\Ric\geq0$. By $\rho_{0}\equiv1$ and \eqref{1st and 2nd variation}, we obtain $H_{\hat{\Sigma}_{1}}=\delta$. Then \eqref{new-stability 1} implies
\begin{equation}
0 \geq \int_{\hat{\Sigma}_{1}}|h_{\hat{\Sigma}_{1}}|^{2}\,d\mu \geq \frac{1}{n-1}\int_{\hat{\Sigma}_{1}}H_{\hat{\Sigma}_{1}}^{2}\,d\mu 
= \frac{\delta^{2}}{n-1}\int_{\hat{\Sigma}_{1}}d\mu > 0,
\end{equation}
which is a contradiction. This shows \eqref{prop:minimial-folation goal} holds. If $m\geq2$, for $1\leq k\leq m-1$, \cite[Lemma 3.1 and 3.2]{BrendleHirschJohne2022} imply
\begin{equation*}
\begin{split}
\Delta_{\Sigma_{k}}\log\rho_{k} = {} & \Delta_{\Sigma_{k}}\log\rho_{k-1}+(\nabla_{\Sigma_{k-1}}^{2}\log\rho_{k-1})(\nu_{k},\nu_{k}) \\[1mm]
& -\left(\lambda_{k}+|h_{\Sigma_{k}}|^{2}+\Ric_{\Sigma_{k-1}}(\nu_{k},\nu_{k})+\langle \nabla_{\Sigma_{k}}\log\rho_{k},\nabla_{\Sigma_{k}}w_{k}\rangle\right) \\[1mm]
= {} & \Delta_{\Sigma_{k-1}}\log\rho_{k-1}+H_{\Sigma_{k}}^{2} \\[1mm]
& -\left(\lambda_{k}+|h_{\Sigma_{k}}|^{2}+\Ric_{\Sigma_{k-1}}(\nu_{k},\nu_{k})+\langle \nabla_{\Sigma_{k}}\log\rho_{k},\nabla_{\Sigma_{k}}w_{k}\rangle\right).
\end{split}
\end{equation*}
Combining this with $\rho_{0}\equiv1$,
\begin{equation*}
\Delta_{\Sigma_{m-1}}\log\rho_{m-1}
= \sum_{k=1}^{m-1}H_{\Sigma_{k}}^{2} -\sum_{k=1}^{m-1}\left(\lambda_{k}+|h_{\Sigma_{k}}|^{2}+\Ric_{\Sigma_{k-1}}(\nu_{k},\nu_{k})+\langle \nabla_{\Sigma_{k}}\log\rho_{k},\nabla_{\Sigma_{k}}w_{k}\rangle\right).
\end{equation*}
Substituting this into \eqref{new-stability 1}, we obtain the following integral inequality:
\begin{equation}\label{new-stability}
\int_{\hat{\Sigma}_{m}}\rho_{m-1}^{-1}(\hat{\Lambda}+\hat{\mathcal{R}}+\hat{\mathcal{G}}+\hat{\mathcal{E}})\,d\mu \leq 0,
\end{equation}
where
\begin{equation}\label{hat Lambda R G E}
\left\{
\begin{array}{ll}
\displaystyle \hat{\Lambda} = \sum_{k=1}^{m-1}\lambda_{k};\\[5mm]
\displaystyle \hat{\mathcal{R}} = \sum_{k=1}^{m-1}\Ric_{\Sigma_{k-1}}(\nu_{k},\nu_{k})+\Ric_{\Sigma_{m-1}}(\hat\nu_{m},\hat{\nu}_{m});\\[5mm]
\displaystyle \hat{\mathcal{G}} = \sum_{k=1}^{m-1}\langle \nabla_{\Sigma_{k}}\log\rho_{k},\nabla_{\Sigma_{k}}w_{k}\rangle, \\[5mm]
\displaystyle \hat{\mathcal{E}} = \sum_{k=1}^{m-1}|h_{\Sigma_{k}}|^{2}+|h_{\hat{\Sigma}_{m}}|^{2}
-\sum_{k=2}^{m-1}H_{\Sigma_{k}}^{2}-H_{\hat{\Sigma}_{m}}^{2}+\delta H_{\hat{\Sigma}_{m}}.
\end{array}
\right.
\end{equation}

\bigskip
\noindent
{\bf Claim.}
For $x\in\hat{\Sigma}_{m}$, let $\{e_{i}\}_{i=1}^{n}$ be an orthonormal basis of $T_{x}N$ such that $e_{k}=\nu_{k}$ for $1\leq k\leq m-1$ and $e_{m}=\hat{\nu}_{m}$. Then the following holds:
\begin{equation}\label{claim}
\hat{\mathcal{R}}+\hat{\mathcal{G}}+\hat{\mathcal{E}} \geq \mathcal{C}_{m}(e_{1},\ldots,e_{m})+\sum_{k=1}^{m}\hat{\mathcal{V}}_{k},
\end{equation}
where $\hat{\mathcal{V}}_{k}=\mathcal{V}_{k}$ in Lemma \ref{lma:BHJ-ineq} for $1\leq k\leq m-1$ and
\begin{equation}
\hat{\mathcal{V}}_{m} = |h_{\Sigma_{m}}|^{2}-H_{\hat{\Sigma}_{m}}^{2}+\delta H_{\hat{\Sigma}_{m}}+\frac{m}{2(m-1)}(H_{\hat{\Sigma}_{m}}-\delta)^{2}.
\end{equation}

\begin{proof}[Proof of Claim]
The argument of \cite[Lemma 3.8]{BrendleHirschJohne2022} shows
\begin{equation*}
\hat{\mathcal{R}} = \mathcal{C}_{m}(e_{1},\ldots,e_{m})
+\sum_{k=1}^{m-1}\sum_{p=k+1}^{m}\sum_{q=p+1}^{n}
\left(h_{\Sigma_{k}}(e_{p},e_{p})h_{\Sigma_{k}}(e_{q},e_{q})-h_{\Sigma_{k}}(e_{p},e_{q})^{2}\right).
\end{equation*}
For the lower bound of $\hat{\mathcal{G}}$, replacing $H_{\Sigma_m}+\langle\nabla_{\Sigma_{m-1}}\log \rho_{m-1},\nu_m\rangle=0$ by $H_{\hat \Sigma_m}+\langle\nabla_{\Sigma_{m-1}}\log \rho_{m-1},\hat\nu_m \rangle=\delta$ in the argument of \cite[Lemma 3.7]{BrendleHirschJohne2022}, we see that
\begin{equation}
\begin{split}
\hat{\mathcal{G}}&\geq \sum_{k=2}^{m-1} \left(\frac12+\frac1{2(k-1)} \right)H_{\Sigma_k}^2+\frac{m}{2(m-1)}\left|\langle  \nabla_{\Sigma_{m-1}}\log\rho_{m-1},\hat\nu_m\rangle\right|^{2}\\
&\geq \sum_{k=2}^{m-1} \left(\frac12+\frac1{2(k-1)} \right)H_{\Sigma_k}^2+\frac{m}{2(m-1)}(H_{\hat \Sigma_m}-\delta)^2.\\
\end{split}
\end{equation}
Then \eqref{claim} follows from the above and \eqref{hat Lambda R G E}.
\end{proof}

Substituting \eqref{claim} into \eqref{new-stability}, and using $\mathcal{C}_{m}\geq0$, $\hat{\Lambda}\geq0$ and Lemma \ref{lma:BHJ-ineq} ($\hat{\mathcal{V}}_{k}=\mathcal{V}_{k}\geq0$ for $1\leq k\leq m-1$), we obtain
\begin{equation}
\begin{split}
0&\geq \int_{\hat \Sigma_m} \rho_{m-1}^{-1} \hat {\mathcal{V}}_m\, d\mu.
\end{split}
\end{equation}
However, direct calculation shows
\begin{equation}
\begin{split}
\hat{\mathcal{V}}_{m}
\geq {} & \frac{H_{\hat{\Sigma}_{m}}^{2}}{n-m}-H_{\hat{\Sigma}_{m}}^{2}+\delta H_{\hat{\Sigma}_{m}}+\frac{m}{2(m-1)}(H_{\hat{\Sigma}_{m}}-\delta)^{2}\\
= {} & \frac{m^2-2-n(m-2)}{2(n-m)(m-1)}H_{\hat \Sigma_m}^2 - \frac{\delta}{m-1} H_{\hat \Sigma_m} +\frac{m\delta^2}{2(m-1)}\\
\geq {} & -\frac{(n-m)\delta^2}{2(m-1)(m^2-2-n(m-2))}+\frac{m\delta^2}{2(m-1)}\\[2mm]
> {} & 0,
\end{split}
\end{equation}
provided that $n(m-1)<m(m+1)$, which is impossible. This contradiction shows that \eqref{prop:minimial-folation goal} holds, which completes the proof.
\end{proof}

\section{Proofs of Theorem~\ref{main-rigidity}, Corollary \ref{main-rigidity-quasi} and \ref{main-topo-lowD}}

After establishing the weighed foliation in section 3,  we are now ready to prove Theorem~\ref{main-rigidity}.
\begin{proof}[Proof of Theorem~\ref{main-rigidity}]
By \cite[Theorem 1.5]{BrendleHirschJohne2022}, the stable weighted slicing in Definition~\ref{defn:Stable weighted slicing of order m} exists.  Moreover, Proposition~\ref{prop:minimial-folation} infers that there exists a local foliation $\{\Sigma_{m,t}\}_{-\e<t<\e}$ of $\Sigma_m$ such that each $\Sigma_{m,t}$ is still a stable minimizer of the weighed area.

Let $g_{m-1}$ and $g_{m,t}$ be the induced metric of $g$ on $\Sigma_{m-1}$ and $\Sigma_{m,t}$ respectively.  We can write $g_{m-1}=f^2_t dt^2+ g_{m,t}$ locally where $f_t$ is the lapse function of $\Sigma_{m,t}$ moving along the foliation and $g_{m,0}=g_m$ is the induced metric on $\Sigma_m$.  Since for $t$ sufficiently small, $\Sigma_{m,t}$  is also a stable minimizer of the weighed area functional. By (ii) in Proposition~\ref{Prop:rigidity}, $\nabla_{\Sigma_{m-1}}\log\rho_{m-1}=0$ along the local foliation $\{\Sigma_{m,t}\}_{-\e<t<\e}$. Combining this with (iii) in Proposition~\ref{Prop:rigidity},
\begin{equation}
\Ric_{\Sigma_{m-1}}(\nu_{m,t},\nu_{m,t}) = \nabla^2_{\Sigma_{m-1}}\log\rho_{m-1}=0 \ \ \text{on $\Sigma_{m,t}$}.
\end{equation}
Using \eqref{constant-weighted-area}, we have
\begin{equation}
\frac{\p^{2}}{\p t^{2}}\int_{\Sigma_{m,t}} \rho_{m-1} \, d\mu = \frac{\p^{2}}{\p t^{2}}\mathcal{H}_{\rho_{m-1}}^{n-m}(\Sigma_{m,t}) = 0
\end{equation}
Combining the above, (iv) in Proposition \ref{Prop:rigidity} and second variation formula (for instances see \cite[Proposition 2.3]{BrendleHirschJohne2022}), we obtain $\Delta_{\Sigma_{m,t}} f_t=0$ and hence $f_t(x)=\phi(t)$ is a function depending only on $t$.  Lemma~\ref{lma:local-foliation} implies that $\phi(0)=1$. Using (iv) in Proposition \ref{Prop:rigidity} again, $\Sigma_{m,t}$ is totally geodesic and then $\partial_t g_{m,t}=0$, which implies $g_{m-1}=\phi(t)^2dt^2+g_m$ locally.  We now re-parametrize $t$ by letting
\begin{equation}
s(t)=\int^t_0 \phi(\tau)\,d\tau
\end{equation}
so that $g_{m-1}=ds^2+g_m$ locally around $\Sigma_m$. Applying the continuity argument as in \cite[Proposition 11]{BrayBrendleNeves2010}, we conclude that $\Sigma_{m-1}$ is isometrically covered by $\Sigma_m\times \mathbb{R}$ and hence $\rho_{m-1}$ is constant on $\Sigma_{m-1}$.

For $y\in\Sigma_{m-1}$, there is $(x,t)\in\Sigma_{m}\times\mathbb{R}$ which belongs to the preimage of $y$. We choose the orthonormal basis $\{e_i\}_{i=1}^n$ of $T_{y}N$ such that $e_{k}=\nu_{k}$ for $1\leq k\leq m-1$, $e_{m}$ is the unit vector of $\mathbb{R}$ and $\{e_{q}\}_{q=m+1}^{n}$ is an orthonormal basis of $T_{x}\Sigma_{m}$. It follows that
\begin{equation}
\mathrm{Rm}_{\Sigma_{m-1}}(e_m,e_q,e_m,e_q) = 0 \ \ \text{for $m+1\leq q\leq n$}.
\end{equation}
By (iv) and (v) in Proposition~\ref{Prop:rigidity}, we compute
\begin{equation*}
\begin{split}
\mathcal{C}_{m-1}(e_{1},\ldots,e_{m-1})&=\sum_{p=1}^{m-1} \sum_{q=p+1}^n \mathrm{Rm}_{N}(e_p,e_q,e_p,e_q)\\
&=\sum_{p=1}^{m} \sum_{q=p+1}^n \mathrm{Rm}_{N}(e_p,e_q,e_p,e_q)-\sum_{q=m+1}^n \mathrm{Rm}_{N}(e_m,e_q,e_m,e_q)\\
&=\mathcal{C}_{m}(e_{1},\ldots,e_{m})-\sum_{q=m+1}^n \mathrm{Rm}_{N}(e_m,e_q,e_m,e_q)\\
&=-\sum_{q=m+1}^n \mathrm{Rm}_{\Sigma_{m-1}}(e_m,e_q,e_m,e_q)=0.
\end{split}
\end{equation*}
To summarize, we have actually shown that if $\mathcal{C}_{m}(e_{1},\ldots,e_{m})=0$ on $\Sigma_{m}$, then $\mathcal{C}_{m-1}(e_{1},\ldots,e_{m-1})=0$ on $\Sigma_{m-1}$ and $\Sigma_{m-1}$ is isometrically covered by $\Sigma_m\times \mathbb{R}$. Repeating the argument inductively,  we see that  $\mathcal{C}_{k}(e_{1},\ldots,e_{k})=0$ and $\Sigma_{k-1}$ is isometrically covered by $\Sigma_{k}\times \mathbb{R}$ for all $1\leq k \leq m$. Therefore, $N=\Sigma_0$ is isometrically covered by $\Sigma_m^{n-m}\times \mathbb{R}^m$.

It remains to show that $\Ric(\Sigma_m)\geq 0$. Let $v\in T_x \Sigma_m$ with $|v|=1$. We choose an orthonormal basis $\{e_i\}_{i=1}^{n}$ of $(x,0^m)\in \Sigma_m^{n-m}\times \mathbb{R}^m$ such that $e_1=v$, $e_{k+1}$ is the unit vector on the $k$-th copy of $\mathbb{R}$ in $\mathbb{R}^m$ for $1\leq k\leq m$, and $\{e_{1}\}\cup\{e_{j}\}_{j=m+2}^{n-m}$ forms an orthonormal basis of $T_{x}\Sigma_{m}^{n-m}$. Using $\mathcal{C}_m\geq 0$ and (iv) in Proposition \ref{Prop:rigidity}, we have
\begin{equation}\label{C_mTORic}
\begin{split}
0\leq \mathcal{C}_m(e_{1},\ldots,e_{m})&=\sum_{p=1}^{m} \sum_{q=p+1}^n \mathrm{Rm}_{N}(e_p,e_q,e_p,e_q)\\
&= \sum_{q=2}^n \mathrm{Rm}_{N}(e_1,e_q,e_1,e_q)\\
&= \sum_{q=m+2}^n \mathrm{Rm}_{\Sigma_{m}}(e_1,e_q,e_1,e_q)\\[2mm]
&=\Ric_{\Sigma_m}(e_1,e_1).
\end{split}
\end{equation}

Finally, if $\mathrm{scal}(N)=0$, then $\mathcal{C}_m=0$ as $\mathrm{scal}(N)$ is the sum of $m$-intermediate curvature and $\mathcal{C}_m\geq 0$. The Ricci flatness follows from \eqref{C_mTORic}.
\end{proof}

With the local product metric structure, the quasi-positive case can be ruled out easily.
\begin{proof}[Proof of Corollary~\ref{main-rigidity-quasi}]
If there exists $x\in N$ such that $\mathcal{C}_m(e_1,...,e_m)>0$ for all possible $\{e_i\}_{i=1}^n$. By Theorem~\ref{main-rigidity}, we have  contradiction if we choose $e_k$ to be the unit vector on the $k$-th $\mathbb{R}$ factor for $1\leq k\leq m$.
\end{proof}

If $n-m\leq 3$, we can appeal to the classification of two-fold and three-fold to prove Corollary~\ref{main-topo-lowD}.

\begin{proof}[Proof of Corollary~\ref{main-topo-lowD}]
Since $N$ is connected, we may assume $X=\Sigma_m$ to be connected by choosing the component so that $N$ is isometrically covered by $\Sigma_m\times \mathbb{R}^{m}$ by Theorem~\ref{main-rigidity}. We note here that the non-zero degree of the mapping $f$ is only used to find the stable weighted slicing.

If $n-m=2$, then $X^{2}$ is a two-fold with $\mathrm{scal}(X)\geq 0$ and hence the it is homeomorphic to either $\mathbb{T}^2$ or $\mathbb{S}^2$ by Gauss-Bonnet Theorem.
Since $N$ is locally isometric to $X^2\times \mathbb{R}^{n-2}$, if $\mathrm{scal}(N)$ is positive at one point, then $\mathrm{scal}(X)>0$ somewhere and hence $X$ is homeomorphic to $\mathbb{S}^2$. Otherwise, $\mathrm{scal}(X)=\mathrm{scal}(N)=0$ and hence $X$ is a flat torus.

If $n-m=3$, then $X^3$ is a three-fold with $\Ric\geq 0$. The universal cover is either $\mathbb{S}^2\times \mathbb{R}$, $\mathbb{R}^3$ or $\mathbb{S}^3$ by the works of Hamilton \cite[Theorem 1.2]{Hamilton4}. In fact, it is determined by its Ricci curvature by Ricci flows and Hamilton's strong maximum principle. Since $N$ is isometrically covered by $X^3\times \mathbb{R}^m$, the curvature of $X$ is completely determined by that of $N$. If $\mathrm{scal}(N)\equiv 0$ or equivalently $\Ric(X^3)\equiv 0$ (since $\Ric\geq 0$), then it is isometrically covered by $\mathbb{R}^3$. If $\Ric(N)$ is strictly positive at some point, then so does $X$ and hence it is covered by $\mathbb{S}^3$. Otherwise, it is covered by $\mathbb{S}^2\times \mathbb{R}$.
\end{proof}

\end{document}